\newtheorem{lemma}{Lemma}[section]
\newtheorem{example}{Example}
\newtheorem{remark}{Remark}[section]
\newtheorem{theorem}{Theorem}[section]
\providecommand{\keywords}[1]{\textbf{\textit{Keywords:}} #1}
\newcommand{\R}{\mathbb R}
\newcommand{\bU}{\mathbf U}
\newcommand{\bn}{\mathbf n}
\newcommand{\bm}{\mathbf m}
\newcommand{\bp}{\mathbf p}
\newcommand{\bs}{\mathbf s}
\newcommand{\bu}{\mathbf u}
\newcommand{\bv}{\mathbf v}
\newcommand{\blf}{\mathbf f}
\newcommand{\bw}{\mathbf w}
\newcommand{\bx}{\mathbf x}
\newcommand{\T}{\mathcal T}
\newcommand{\Div}{\mbox{\rm div}}
\newcommand{\rd}{\mathrm{d}}
\definecolor{darkred}{rgb}{.7,0,0}
\definecolor{green}{rgb}{0,0.7,0}
\def\MO#1{{\color{black}#1}}
\def\dO{{\partial\Omega} }
\def\dG{{\partial\Gamma} }
\definecolor{linkcolor}{HTML}{799B03} 
\definecolor{urlcolor}{HTML}{799B03} 
\begin{document}

\title{An unfitted finite element method for the Darcy problem in a fracture network}
\author{
Alexey Y. Chernyshenko\thanks{
Institute of Numerical Mathematics, Russian Academy of Sciences, Moscow 119333}
\and
Maxim A. Olshanskii\thanks{Department of Mathematics, University of Houston, Houston, Texas 77204-3008 {\tt molshan@math.uh.edu}}
}

\date{}
\maketitle

\markboth{}{}

\begin{abstract}
The paper develops an unfitted finite element  method for solving the Darcy  system of equations posed
in a network of fractures embedded in a porous matrix. The approach builds on the Hughes--Masud stabilized formulation of the Darcy problem and the trace finite element method.  The system of fractures is  allowed to cut through the background  mesh in an arbitrary way. Moreover, the fractures are not triangulated in the common sense and the junctions of fractures are not fitted by the mesh. To couple the flow variables at multiple fracture junctions, we extend the Hughes--Masud formulation by including  penalty terms to handle interface conditions. One observation made here is that by over-penalizing the pressure continuity interface condition one can avoid including additional jump terms along the fracture junctions. This simplifies the formulation while ensuring the optimal convergence order of the method. The application of the trace finite element allows to treat both planar and curvilinear fractures with the same ease. The paper presents convergence analysis and assesses the performance of the method in a series of numerical experiments. For the background mesh we use an octree grid with cubic cells.
The flow in the fracture can be easily coupled with the flow in matrix, but we do not pursue the topic of discretizing such coupled system here.
 \end{abstract}

\keywords{
 fractured porous media, Darcy, Trace FEM, unfitted meshes, octree grid
}


\section{Introduction}
Numerical modelling of a flow in a fractured porous medium is a standard problem in geosciences and reservoir simulation~\cite{economides1989reservoir,zoback2010reservoir}. While the literature on this topic is overwhelming (see, e.g., \cite{antonietti2019discontinuous,boon2018robust,chave2018hybrid,flemisch2018benchmarks,Koppel2019} for a snapshot of recent research), the problem of developing an accurate and effective numerical method for a complex network of fractures still constitutes a challenge.
The present paper contributes to the topic by introducing a finite element method for the Darcy problem posed in a system of intersecting  fractures represented by a set of 2D surfaces embedded in a bulk domain. The enabling feature
of the method is that it solely uses the background triangulation of the bulk domain (i.e., a tessellation in simplexes or more general polytopes) which is completely independent of the fracture network. Moreover, it does not require any 2D mesh fitted to the fracture surfaces or their intersections.

Application of geometrically unfitted finite element methods for the modelling of flow and transport in fractured porous medium have been addressed recently in a number of publications; see, e.g., \cite{berrone2013simulations,flemisch2016review,huang2011use}.
Developments most closely related to the approach taken in the present paper are those found in~\cite{chernyshenko2018hybrid,formaggia2014reduced,hansbo2017stabilized}. Thus in \cite{formaggia2014reduced} the authors consider a low order Raviart-Thomas finite element method for the Darcy flow on a 1D network of fractures. Although a triangulation of each fracture surface were build, these triangulations do not match the fracture
intersection points, and the authors applied XFEM methodology to handle discontinuities in the solution over the junctions. The recent paper \cite{fumagalli2019conforming} reviews this and other numerical approaches, where a different degree of the conformity of fracture meshes at junction interfaces is assumed. However, triangulating each fracture branch  can be itself a demanding task for large networks or complex geometry.  Hence, the next level of nonconformity is to abandon the  triangulation of the fracture in the usual sense and to discretize the flow problem along the fracture network only with the help of degrees of freedom tailored to the ambient mesh in the matrix. This ambient (background) mesh should be  independent of the embedded fracture network. This approach was first taken in \cite{ORG09} to discretize scalar elliptic PDEs posed on surfaces and later it evolves to become the Trace FEM methodology~\cite{TraceFEM} and a part of the Cut FEM~\cite{cutFEM}. Trace FEM for the transport and diffusion  of a contaminant in a fractured porous media was recently developed in \cite{chernyshenko2018hybrid}.
For the Darcy problem, the Trace FEM was first studied in \cite{hansbo2017stabilized}.  In that  paper, the authors considered the Darcy problem posed on a surface embedded in a bulk tetrahedra grid. They use a variant of Hughes--Masud week formulation to solve for the pressure and tangential velocity.

Following \cite{hansbo2017stabilized}, we apply Trace FEM in combination with  a variant of Hughes--Masud week formulation. The novelty of the present work is two-fold. First, for the ambient mesh we consider octree Cartesian grids, which can be easily adapted. Second and more importantly, we assume intersecting piecewise smooth surfaces (representing branching fractures), while in the previous work only closed smooth manifolds were considered. The branching leads to discontinuous fluxes and only piecewise smooth pressure field. Handling those without mesh fitting, \emph{but} preserving optimal convergence order, is not straightforward. In the paper, this is achieved by allowing discontinuous velocity and pressure fields in background cells intersected by the fracture junctions and by including a penalty term. This treatment of fracture junctions draw an analogy with the Nitsche-XFEM method of Hansbo and Hansbo \cite{Hansbo} for interface problems and more general with CutFEM~\cite{cutFEM}. One interesting difference, however, is that we use another scaling for the penalty term and skip certain consistency terms (typical for discontinuous Galerkin FEM and Nitsche's method) along the junctions without sacrificing optimal asymptotic  accuracy.
The paper includes both numerical analysis and computational assessment of the method.

The remainder of the paper is organized as follows. Section~\ref{s_setup} defines the mathematical model. Section~\ref{s_FEM} introduces the finite element method. Section~\ref{s_analysis} presents the convergence analysis of the method.
Section~\ref{s_numer} collects the results of several numerical experiments that illustrate the analysis and the performance of the method.  

\section{Mathematical model}\label{s_setup}

Assume  a piecewise smooth surface $\Gamma\subset\Omega$ embedded in the given bulk  domain $\Omega \subset \mathbb{R}^3$.
The surface $\Gamma$ represents a 2D fracture network and consists of several connected components $\overline\Gamma=\cup_{i=1}^N\overline{\Gamma}_i$,
where each $\Gamma_i$ is smooth orientable surface without self-intersections. For the purpose of analysis, we shall assume that each $\Gamma_i$ is a subdomain \MO{of} a larger \MO{$C^2$}--smooth  surface $\widehat{\Gamma}_i$, such that $\partial\widehat{\Gamma}_i\cap\Omega=\emptyset$ and $\partial\Gamma_i$ is piecewise smooth and Lipschitz as a curve in $\widehat{\Gamma}_i$. The individual components $\Gamma_i$ may intersect only by a curve, i.e. $\mbox{meas}_2(\overline{\Gamma}_i\cap\overline{\Gamma}_j)=0$ for  $i\neq j$, and also  ${\Gamma}_i\cap{\Gamma}_j=\emptyset$, for  $i\neq j$ (this condition means that parts of a fracture separated by a junction are treated as different components).
Further $\bn$ is a unit normal vector defined everywhere on $\Gamma$ except junction interfaces. We shall write $\bn_i$ for $\bn$ on   $\Gamma_i$ and similar for other vector and scala fields defined on $\cup_{i=1}^N\Gamma_i$.

Modeling fractures  as 2D interfaces for flow in porous media has been considered in many places in the literature; see, e.g., \cite{alboin2002modeling,angot2009asymptotic,frih2012modeling,martin2005modeling}.
In this framework, the flow along the fracture component ${\Gamma}_i$ is described in terms of tangential velocity field $\bu_i(\bx)$,  having the physical meaning of the flow rate through the cross-section of the fracture, and pressure field $p_i(\bx)$, $\bx \in {\Gamma}_i$.
The steady state flow in $\Gamma$, is governed  by the  Darcy systems
\begin{equation} \label{Darcy}
\left\{
\begin{aligned}
 \MO{K_i^{-1}}\bu_i + \nabla_\Gamma p_i &= \mathbf{f}_i\\
 \Div_\Gamma\bu_i&=g\\
 \bu_i\cdot\bn_i&=0
 \end{aligned}
\right. \quad \text{in}~ \Gamma_i,\quad i=1,\dots,N,
 \end{equation}
together with interface and boundary conditions specified below.
In \eqref{Darcy} and further in the text,  $\nabla_\Gamma$ and  $\Div_\Gamma$  denote the surface tangential gradient and divergence operators; $g$ stands for the source term, which is typically due to the fluid exchange with the porous matrix (not treated in this paper); $\mathbf{f}_i$ is an exterior force per unit area, $\mathbf{f}_i$ is tangential to $\Gamma_i$;  $K_i$ denotes the permeability tensor along the fracture; all $K_i$ are symmetric and \MO{such that for any tangential vector field $\bv$, i.e. $\bv\cdot\bn_i=0$, it holds $\bv^T K_i\bv\ge \zeta_i |\bv|^2$ with some $\zeta_i>0$}, and $\bn^T_{i} K_i\bv=0$. \MO{Hence, $K_i^{-1}\bv$ is well defined for a tangential
field $\bv$.} Note that fracture aperture can be included in $K_i$ by scaling; see, e.g.~\cite{alboin2002modeling}.

When $\Gamma$ is  \textit{piecewise} smooth,  we need further conditions on the edges (fracture junctions).
Consider an edge $e$ shared by  $M_e$  smooth components $\Gamma_{i_k}$, $k=1,\dots,M_e$. Here and in the rest of the paper,
$\{i_k\}_{k=1,\dots,M_e}$ denote the subset of indexes from $\{1,\dots,N\}$, which is specific for each given $e$.
Denote by $\bm_{i}$  the  normal vector on $\dG_i$ in the plane tangential to $\Gamma_i$ and pointing outward.
The conservation of fluid mass yields
\begin{equation}\label{cont_w}
\sum_{k=1}^{M_e}\bu_{i_k}\cdot\bm_{i_k}=0\quad \text{on}~~e.
\end{equation}
The second interface condition is the continuity of pressure over $e$,
\begin{equation}\label{cont_v}
p_{i_1}=\dots=p_{i_{M_e}}\quad \text{on}~~e.
\end{equation}
Denote by $E$ the collections of all fracture junctions. It is reasonable to assume that $E$ is a finite set and $0<\mbox{meas}_1(e)<+\infty$ for any $e\in E$.

Finally, we prescribe the pressure  boundary condition on $\dG_D$ and the flux boundary condition on $\dG_N$, respectively, with
 $\overline{\dG}=\overline{\dG_D}\cup\overline{\dG_N}$,
\begin{equation} \label{bc}
\left\{
\begin{aligned}
\bm_{i}\cdot\bu_i &=\phi_i \quad \text{on}~ \dG_N\cap\dG_i,~~i=1,\dots,N\\
 p&=p_D \quad \text{on}~ \dG_D.
 \end{aligned}
\right.
 \end{equation}

\section{Finite element method}\label{s_FEM}

First we assume a tessellation  $\T_h$  of  the bulk domain $\Omega$ (matrix).
$\T_h$ can be a consistent subdivision into shape-regular tetrahedra. In this paper, we consider Cartesian background mesh with cubic cells. We allow local refinement of the mesh by sequential division
of any cubic cell into 8 cubic subcells. This leads to a grid with an octree hierarchical  structure. This mesh gives the tessellation   $\T_h$ of the bulk domain $\Omega$, $\overline{\Omega}=\cup_{T\in\T_h} \overline{T}$.
We allow the fracture network  $\Gamma\subset\Omega$ to cut through this mesh in an arbitrary way. For the purpose of analysis, we shall assume that the cells cut by $\Gamma$ have a quasi-uniform size with the characteristic size $h$.

Consider now the ambient finite element space of all piecewise trilinear continuous functions with respect to the bulk octree mesh $\mathcal{T}_h$:
\begin{equation}
 V_h:=\{v\in C(\Omega)\ |\ v|_{S}\in Q_1~~ \forall\ S \in\mathcal{T}_h\},\quad\text{with}~~
 Q_1=\mbox{span}\{1,x_1,x_2,x_3,x_1x_2,x_1x_3,x_2x_3,x_1x_2x_3\}.
 \label{e:2.6}
\end{equation}
For every fracture $\Gamma_i$ in the network $\Gamma$ we define the subdomain of $\Omega$ consisting of all cells cut by $\Gamma_i$,
\[
\Omega^i_h=\bigcup\{T\in \T_h\,:\, T\cap\Gamma_i\neq\emptyset\},
\]
and define the restriction of $V_h$ on $\Omega^i_h$, i.e. the space of  piecewise trilinear continuous functions on $\Omega^i_h$,
\begin{equation}
 V_h^{i}:=\{u\in C(\Omega^i_h)\ |\ \exists ~ v\in V_h\  \text{such that }\ u=v|_{\Omega^i_h}\}.
\label{e:fem-space}
\end{equation}
Our trial and test finite element spaces are built from $ V_h^{i}$: We define the pressure space and velocity spaces
\[
Q_h=\bigotimes_{i=1}^N V_h^{i}\quad\text{and}\quad \bU_h=\bigotimes_{i=1}^N [V_h^{i}]^3.
\]
According to the Trace FEM approach,  the finite element solutions of \eqref{Darcy}--\eqref{bc} will be given by \text{traces} of functions from $Q_h$ and $\bU_h$ on $\Gamma$, but the finite element formulation will be written in terms of function defined on  $\bigcup_{i=1}^N\Omega^i_h$. Hence, the method leads to a system of algebraic equations for standard nodal degrees of freedom in the ambient mesh $\T_h$.

Further we use the notation $(\cdot,\cdot)_{Q}$ for the $L^2$ scalar product over a domain $Q$, which can be a 3D, 2D or 1D manifold on different occasions.  For example, with this notation, the Green formula on $\Gamma_i$ reads:
\begin{equation}\label{Green}
(\Div_\Gamma \bv,q)_{\Gamma_i}=-(\bv,\nabla_\Gamma  q)_{\Gamma_i}+(\bm_i\cdot\bv,q)_{\dG_i}
\end{equation}
for any smooth tangential vector field $\bv$ and scalar function $q$ on $\Gamma_i$.

The proposed finite element formulation extends the stabilized mixed formulation for the Darcy problem originally introduced in \cite{masud2002stabilized} for the planar domains.
The key observation here is that the smooth solution to \eqref{Darcy}--\eqref{bc} satisfies the identity
\[
(\MO{K_i^{-1}}\bu+\nabla_\Gamma p,\bv)_{\Gamma_i}+(\Div_\Gamma \bu,q)_{\Gamma_i}+\frac12(\MO{K_i^{-1}}\bu+\nabla_\Gamma p,-\bv+\MO{K_i}\nabla_\Gamma q)_{\Gamma_i} = (g,q)_{\Gamma_i}+\frac12(\blf,-\bv+\MO{K_i}\nabla_\Gamma q)_{\Gamma_i}
\]
for all $q\in H^1(\Gamma_i)$, $\bv\in L^2(\Gamma_i)$ and $i=1,\dots,N$. 
We now set $q=0$ on $\dG_D$ and apply \eqref{Green}. After simple calculations this gives
\begin{equation}\label{aux1}
(\MO{K_i^{-1}}\bu,\bv)_{\Gamma_i}+(\nabla_\Gamma p,\bv)_{\Gamma_i} -(\nabla_\Gamma q,\bu)_{\Gamma_i}+ (\MO{K_i}\nabla_\Gamma p,\nabla_\Gamma q)_{\Gamma_i}+2(\bm_i\cdot\bu,q)_{\dG_i} = 2 (g,q)_{\Gamma_i}+(\blf,-\bv+\MO{K_i}\nabla_\Gamma q)_{\Gamma_i}.
\end{equation}
One further helpful observation is that $p$ and $q$ can be identified with their normal extensions to a neighborhood of $\Gamma_i$ (for each $i$). This identification (which is assumed further in the paper) implies the equality $\nabla_\Gamma p=\nabla p$, which can be further used in \eqref{aux1} to yield
\begin{equation}\label{aux2}
(\MO{K_i^{-1}}\bu,\bv)_{\Gamma_i}+(\nabla p,\bv)_{\Gamma_i} -(\nabla q,\bu)_{\Gamma_i}+ (\MO{K_i}\nabla p,\nabla q)_{\Gamma_i}+2(\bm_i\cdot\bu,q)_{\dG_i} = 2 (g,q)_{\Gamma_i}+(\blf,-\bv+\MO{K_i}\nabla q)_{\Gamma_i}.
\end{equation}
This corresponds to so-called \emph{full gradient} formulation of the surface PDES; see \cite{Alg2,reusken2015analysis}. \MO{The full gradient formulation exploits the embedding of $\Gamma$ in the ambient Euclidian space and, in general, provides extra stability for a finite element method based on external elements. The formulation is consistent for any ambient finite element method, which aims to approximate the surface solution together with its normal extension.}  In the context of the surface Darcy problem, the full gradient formulation was used in~\cite{hansbo2017stabilized}. We finally sum up equalities \eqref{aux2} for all $i=1,\dots,N$ and use the interface condition \eqref{cont_w} and the boundary condition for fluxes from \eqref{bc} to conclude that any smooth solution of \eqref{Darcy}--\eqref{bc} satisfies
\begin{multline}\label{week}
(\MO{K^{-1}}\bu,\bv)_{\Gamma}+(\nabla p,\bv)_{\Gamma} -(\nabla q,\bu)_{\Gamma}+ (\MO{K_i}\nabla p,\nabla q)_{\Gamma}+\sum_{e\in E}\frac2{M_e}\sum_{k=1}^{M_e-1}\sum_{\ell={k+1}}^{M_e}(\bm_{i_k}\cdot\bu_{i_k}-\bm_{i_\ell}\cdot\bu_{i_\ell},q_{i_k}-q_{i_\ell})_{e} \\ = 2 (g,q)_{\Gamma}+(\blf,-\bv+\MO{K_i}\nabla q)_{\Gamma}-2(\psi,q)_{\dG_N}
\end{multline}
for any $q\in \bigotimes_{i=1}^N H^1(\Gamma_i)$ such that $q=0$ on $\dG_D$ and $\bv\in L^2(\Gamma)^3$. To handle the sum of the edge terms, we used \eqref{cont_w} and the identity
\[
\sum_{i=1}^{M}a_ib_i=\frac1M\left(\big(\sum_{i=1}^{M}a_i\big)\big(\sum_{i=1}^{M}b_i\big)+\sum_{i=1}^{M-1}\sum_{j={i+1}}^{M}(a_i-a_j)(b_i-b_j)\right)
\]
for any $a_i,b_i\in\mathbb{R}$.

Our finite element method is based on the equality \eqref{week}. \MO{Note that we may}  assume that $K_i$ is extended to be symmetric positive definite in $\mathbb{R}^3$, \MO{rather than only on the tangential space, since this does not affect any quantities in \eqref{week}, but would be helpful, when we proceed with the finite element formulation.}
To approximate pressure, we use finite element functions from $Q_h$, which are discontinuous across $e\in E$. Therefore, we add a penalty term to our formulation to weekly enforce the pressure continuity condition from \eqref{bc}. Furthermore, we \emph{over-penalize} this condition, by choosing a different scaling of the penalty parameter compared to the standard Nitsche's~\cite{Hansbo} or discontinuous Galerkin methods~\cite{arnold2002unified}. It turns out that the over-penalization allows one to skip other edge terms in the finite formulation. This greatly simplifies the method while keeping the consistency order optimal.
Summarizing, the finite element method reads:
 Find $\bu_h\in \bU_h$ and $p_h\in Q_h$ such that $p_h|_{\partial\Gamma_{D}}=\MO{I_h^b}(p_D)$ and
\begin{equation}\label{FEM0}
  a(\bu_h,p_h;\bv_h,q_h)=f(\bv_h,q_h)
\end{equation}
for all $\bv_h\in \bU_h$ and $q_h\in Q_h$ such that $q_h|_{\partial\Gamma_{D}}=0$, with
\begin{align}
a(\bu,p;\bv,q)&=(\MO{K^{-1}}\bu,\bv)_{\Gamma}+(\nabla p,\bv)_{\Gamma} -(\nabla q,\bu)_{\Gamma}+ (\MO{K_i}\nabla p,\nabla q)_{\Gamma} +\underbrace{\sum_{e\in E}\frac{\rho_e}{h^2}\sum_{k=1}^{M_e-1}\sum_{\ell=k}^{M_e}(p_{i_k}-p_{i_\ell},q_{i_k}-q_{i_\ell})_{e}}_{\text{penalty term to enforce pressure continuity}} \notag
\\
 &\qquad+\underbrace{\sum_{i=1}^N \rho_u h(\bn_i\cdot\nabla \bu_i ,\bn_i\cdot\nabla  \bv_i)_{\Omega_h^i}+
 \sum_{i=1}^N \rho_p h(\bn_i\cdot\nabla p_i,\bn_i\cdot\nabla  q_i)_{\Omega_h^i}}_{\text{normal volume stabilization}}, \label{FEM} \\
 f(\bv,q)&= 2 (g,q)_{\Gamma}+(\blf,-\bv+\MO{K_i}\nabla q)_{\Gamma}-2(\psi,q)_{\dG_N}. \notag
\end{align}
 Here $\rho$'s are tunable parameters,
which we set (in both analysis and experiments) to be equal to $1$; \MO{$I_h^b(p_D)$ is the interpolation of the boundary condition, which we define by extending pressure values from $\partial\Gamma_{D}$ along normal directions in $\dO$ to the corresponding  nodal values  from $\Omega_h^i\cap\dO$.}

\begin{remark}[Normal volume stabilization]\label{rem_stab}\rm
We briefly discuss the  ``normal volume stabilization'' terms in \eqref{FEM}.
The term involve the extension of the normal vector to $\Omega_h^i$, which can be defined as $\bn_i(\bx)=\nabla \mbox{dist}(\bx,\widehat{\Gamma}_i)$. Assuming that the mesh is fine enough to resolve the (curvilinear) geometry, this definition  gives the meaning to the normal vector in all $\Omega_h^i$, including mesh cells cut by $\dG_i$.
Next, we note that the normal volume stabilization terms vanish for the solution $\bu$, $p$ of the Darcy equations~\eqref{Darcy}, because we assume the normal extension of the solution off the fracture components.
Finally, these terms are included,  following~\cite{burman2016cutb,grande2017analysis}, to ensure that algebraic properties of the resulting linear systems are insensitive to the position of $\Gamma$ against the background mesh.
Indeed, if $\rho_u=0$ or $\rho_p=0$, then for a natural nodal basis in $\bU_h$ and $Q_h$, small cuts of the background elements by the surface  may lead to arbitrarily small diagonal entries in the resulting matrix. The stabilization terms in \eqref{FEM} eliminate this problem since for the choice of $\rho_u=O(1)$ and $\rho_p=O(1)$ they allow to get control over the $L^2(\Omega_h^i)$-norms of $\bv_h\in (V_h^i)^3$, $q_h\in V_h^i$ by the problem induced norms. The analysis of this acquired  algebraic stability  can be found at several places in the literature, e.g.  \cite{burman2016cutb,grande2017analysis} for the Laplace--Beltrami problem or
\cite{olshanskii2018finite} for the surface Stokes problem, so we omit repeating it here.
\end{remark}

\begin{remark}[Overpenalty]\rm In the framework of discontinuous Galerkin (DG) methods, a  technique similar to the overpenalty used to enforce pressure continuity here is known as a superpenalty; see \cite{arnold2002unified} and references there. Compared to the  superpenalty technique in DG FEM, we have a weaker dependence of the penalty parameter on the negative power of $h$, which is beneficial for the condition number of the resulting matrices. On the analysis size, the superpenalty DG method exploits  the availability of a continuous finite element interpolant across the element edges, which is not the case here. \MO{As a consequence, to show a suitable consistency bound, we have to apply a different argument comparing to the analysis of the superpenalty DG method. This results in the extra smoothness assumption for pressure solution, e.g. $p$ is from $H^3$ on every $\Gamma_i$ rather than from $H^2$.}
\end{remark}

\begin{remark}[Internal parts of $\partial\Gamma$]\label{Rem_Bound}\rm For the finite element formulation in \eqref{FEM} we assumed  that $\dG\subset\dO$.  If  $\partial\Gamma_D$ has a part strictly inside $\Omega$, then the pressure boundary condition can be enforced by including additional penalty term of the form
\[
\sum_{i=1}^{N}\frac{\rho_i}{h^2}(p_i-p_D,q_i)_{\Gamma_i\cap\Gamma_D}
\]
to the finite element formulation \eqref{FEM}. Internal boundaries with prescribed fluxes, i.e. $\partial\Gamma_N\subset\Omega$, do not affect the formulation in \eqref{FEM}.
\end{remark}

\subsection{Numerical integration}
The finite element formulation \eqref{FEM} requires computing surface integrals. If $\Gamma_i$ is a planar component, then
numerical integration is straightforward.   For a  curvilinear $\Gamma$, in general, we need to know a (local) parametrization of the surface to compute integrals in \eqref{FEM}. For implicitly given surfaces (for example, for surfaces defined as the zero of a distance function), the numerical integration is a more subtle issue; see, e.g., \cite{olshanskii2016numerical}.
In the present paper, for  numerical tests with curvilinear surfaces we compute surface integrals by using a polygonal second order approximation of $\Gamma_i$, denoted by $\Gamma_{h,i}$.
We construct $\Gamma_{h,i}$ as follows. For  $\Gamma_i$ let $\phi$ be a Lipshitz-continuous level set function, such that $\phi(\bx)=0$ on $\Gamma_i$. We set $\phi_h=I(\phi)$,  a nodal interpolant of $\phi$ by a  piecewise trilinear continuous function with respect to the octree grid $\mathcal{T}_h$.  Further, consider the zero level set of $\phi_h$,
$
\widetilde{\Gamma}_{h,i}:=\{\bx\in\Omega\,:\, \phi_h(\bx)=0 \}.
$
If $\Gamma_i$  is smooth, then  $\widetilde{\Gamma}_{h,i}$ is an approximation to $\Gamma_{i}$ in the following sense:
\begin{equation}\label{eq_dist}
\mbox{dist}(\Gamma_{i},\Gamma_{h,i})\le ch^2_{\rm loc},\qquad |\bn(\bx)-\bn_h(\tilde\bx)|\le c h_{\rm loc},
\end{equation}
where $\bx$ is the closest point on $\Gamma_i$ for $\tilde\bx\in\widetilde{\Gamma}_{h,i}$ and $h_{\rm loc}$ is the local mesh size.
We note that in some applications, $\phi_h$ is computed from a solution of a discrete indicator function equation, without a direct knowledge of $\Gamma$.

\begin{figure}
  \begin{center}a)
    \includegraphics[width=0.31\textwidth]{./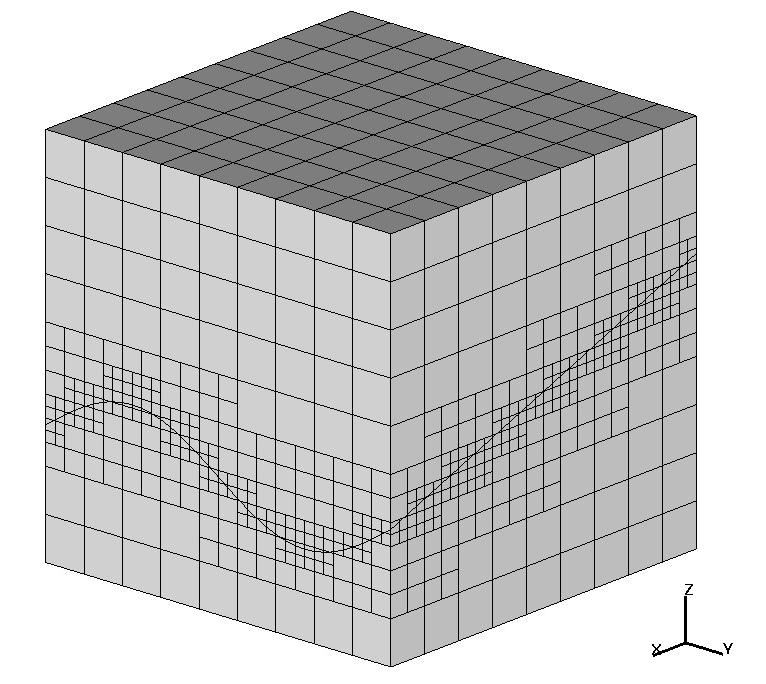}b)
    \includegraphics[width=0.31\textwidth]{./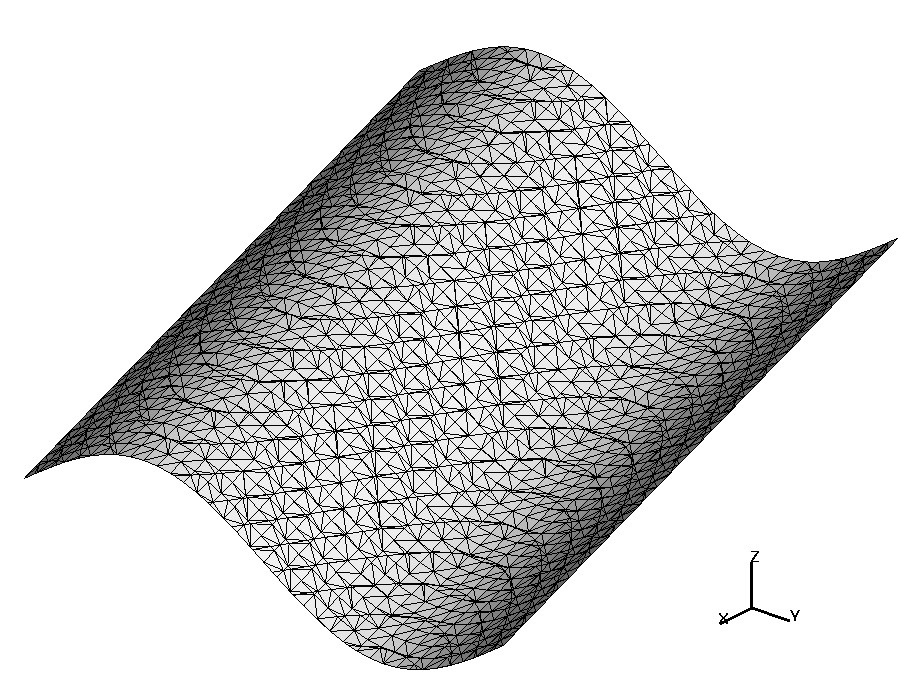} c)
    \includegraphics[width=0.29\textwidth]{./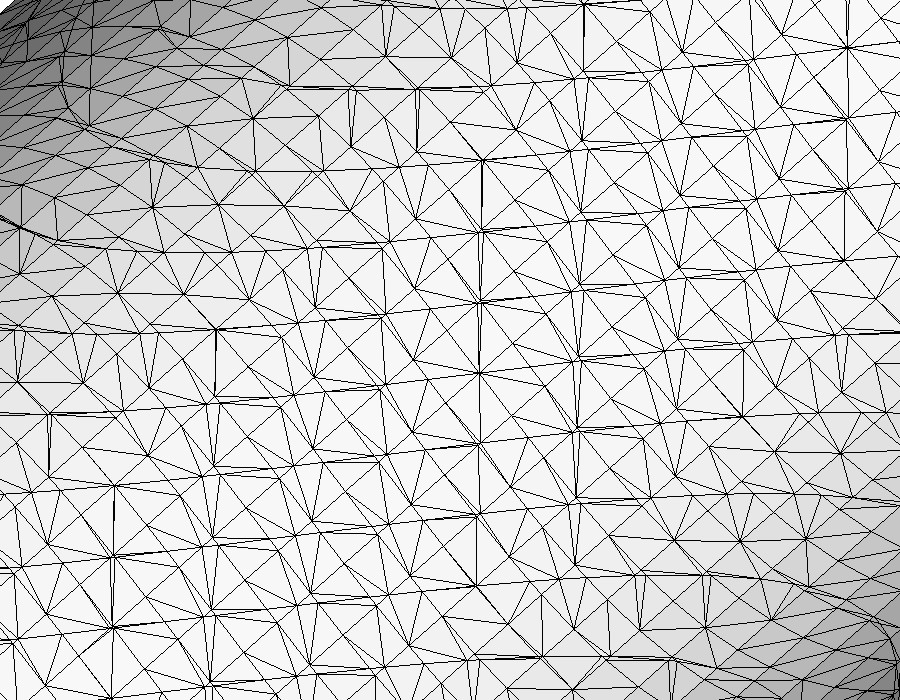}
    \caption{a) Example of a  bulk domain with one fracture. In this example, the background mesh is refined near the fracture; b) The reconstructed $\Gamma_h$; c) The zoom-in of the induced surface triangulation used for numerical integration.}
    \label{fig1}
  \end{center}
\end{figure}
Once $\phi_h$ is computed, we recover $\Gamma_{h,i}$ by the cubical marching squares method from \cite{MCM2} (a variant of the very well-known marching cubes method). The method provides a triangulation of $\widetilde{\Gamma}_h$ within each
cube such that the global triangulation is continuous, the number of triangles within each cube is finite and bounded
by a constant independent of $\widetilde{\Gamma}_{h,i}$ and a number of refinement levels. Moreover, the vertices of triangles from $\mathcal{F}_h$ are lying on $\widetilde{\Gamma}_{h,i}$. This final discrete surface $\Gamma_{h,i}$ is still an approximation of $\Gamma_i$ in the sense of \eqref{eq_dist}. An example of bulk domain with embedded surface and background mesh is illustrated in Figure~\ref{fig1}. Note that the resulting ``triangulation'' of $\Gamma_{h,i}$ is \emph{not} shape  regular. This is not a problem, since this triangulation is used only to define quadratures in the finite element method, while approximation properties of the method depend on the volumetric octree mesh.

\section{Error analysis} \label{s_analysis}

The error analysis fits the standard finite element framework. Certain care and less standard arguments will be needed to show optimal order consistency for the formulation in \eqref{FEM}. Interpolation results rely on   approximation properties of the polynomial traces on smooth surfaces. We start with the definition of the norms used further and the proof of numerical stability.

\subsection{Stability}
For  analysis, we need the broken Sobolev spaces
\[
Q=\bigotimes_{i=1}^N H^1(\Omega_h^i)\cap H^1(\Gamma_i) \quad\text{and}\quad \bU=\bigotimes_{i=1}^N [H^1(\Omega_h^i)]^3,\qquad Q_h\subset Q,\quad \bU_h\subset\bU.
\]
The subspaces of functions from $Q$ and $Q_h$ vanishing on $\dG_D$ are denoted by $Q^0$ and $Q_h^0$, respectively.
The stability estimate  of the method  involves the following problem-dependent velocity and pressure norms:
\[
\|\bv\|_\ast^2= (\MO{K^{-1}}\bv,\bv)_{\Gamma} +\sum_{i=1}^N \rho_u h\|\bn_i\cdot\nabla  \bv_i\|^2_{\Omega_h^i},
\quad
\|q\|_\ast^2= \|\MO{K}\nabla q\|^2_{\Gamma}+ \sum_{e\in E}\frac{\rho_e}{h^2}\sum_{k=1}^{M_e-1}\sum_{\ell=k}^{M_e}\|q_{i_k}-q_{i_\ell}\|^2_{e}+
 \sum_{i=1}^N \rho_p h\|\bn_i\cdot\nabla  q_i\|^2_{\Omega_h^i},
\]
where $\bv\in\bU$, $q\in Q$. For the later expression to define a norm on $Q^0$ we shall assume that $\mbox{meas}_1(\dG_D)>0$ and $\Gamma$ is connected.
Otherwise, if $\mbox{meas}_1(\dG_D)=0$, one uses a factor norm, since the pressure solution to \eqref{Darcy}--\eqref{bc} is defined only up to the addition of the hydrostatic constant mode. On the product space $\bU\times Q$, we define
\[
\|\bv,q\|=(\|\bv\|_\ast^2+\|q\|_\ast^2)^{\frac12}.
\]

Given the definitions above, one immediately checks the coercivity and continuity bounds for the finite element method bilinear form,
\begin{equation}\label{coer}
\|\bv,q\|^2\le a(\bv,q;\bv,q)\quad \forall\, \bv\in\bU,\, q\in Q,
\end{equation}
and
\begin{equation}\label{contA}
a(\bu,p;\bv,q)\le 2 \|\bu,p\|\|\bv,q\|\quad \forall\, \bu,\bv\in\bU,\, p,q\in Q.
\end{equation}
\smallskip

The repeating application of the Sobolev inequality
\[
\|q\|_{\Gamma_i} \le C(\Gamma_i,e)(\|\nabla_\Gamma q\|_{\Gamma_i}+\|q\|_e)\quad q\in H^1(\Gamma_i),\, e\in\dG_i,\, \mbox{meas}_1(e)>0,
\]
 and the trace inequality
\begin{equation}\label{Trace2}
\|q\|_e  \le C(\Gamma_i,e)(\|\nabla_\Gamma q\|_{\Gamma_i}+\|q\|_{\Gamma_i})\quad q\in H^1(\Gamma_i),\, e\in\dG_i,\, \mbox{meas}_1(e)>0,
\end{equation}
leads to the Poincare inequality on $\Gamma$:
\begin{equation}\label{Poinc}
 \|q\|_\Gamma\le  C \left(\|\nabla q\|^2_{\Gamma}+ \sum_{e\in E}\frac{\rho_e}{h^2}\sum_{k=1}^{M_e-1}\sum_{\ell=k}^{M_e}\|q_{i_k}-q_{i_\ell}\|^2_{e}\right)^{\frac12}\le C\|q\|_\ast\quad \forall\, q\in Q^0,
\end{equation}
where $C$ depends on $\Gamma$, $\dG_D$ and the permeability tensor $K$.

With the help of the Poincare, Cauchy-Schwartz, trace and triangle inequalities, one shows
\begin{equation*}
\begin{split}
  f(\bv,q)&\le 2 \|g\|_{\Gamma}\|q\|_{\Gamma}+(\|K^{\frac12}\blf\|_{\Gamma}\|K^{-\frac12}\bv\|_{\Gamma}+\|K^{\frac12}\blf\|_{\Gamma}\|K^{\frac12}\nabla q\|_{\Gamma})
 -2\|\psi\|_{\dG_N} \|q\|_{\dG}\\
 &\le C\left((\|g\|_{\Gamma}+\|\psi\|_{\dG_N}+ \|K^{\frac12}\blf\|_{\Gamma}) \|q\|_{\ast}+ \|K^{\frac12}\blf\|_{\Gamma} \|\bv\|_{\ast}\right).
\end{split}
\end{equation*}
Finally, from \eqref{FEM0}, \eqref{coer} and the estimate above we get the stability bound for the finite element solution
\begin{equation}\label{Stab}
\|\bu_h\|_\ast+\|p_h\|_\ast\le C\left(\|g\|_{\Gamma}+\|\psi\|_{\dG_N}+ \|\blf\|_{\Gamma}\right),
\end{equation}
where $C$ depends on $\Gamma$, $\dG_D$ and $K$, but not on the position of $\Gamma$ over the background mesh. \MO{Penalty and volumetric stabilization terms (cf. Remark~\ref{rem_stab}) in the definition of the norms on the left-hand side depend on $h$.}

\subsection{Error bound}
For the error analysis, we shall assume that all fractures are planar, so that $\widehat{\Gamma}_i\simeq \mathbb{R}^2$ is just a plane. We believe that the error estimate below still holds for fractures with non-zero curvatures, but the analysis \MO{needs  trace } and extensions results for functions defined on submanifolds \MO{as in \eqref{Ext1} and \eqref{stripEst}}, which we do not find in the literature and including their proof would made this paper excessively technical.

As usual, the error analysis needs some extra regularity of the solution, namely $\bu_i\in H^1(\Gamma_i)^3$ and $p_i\in H^3(\Gamma_i)$ for each $i=1,\dots,N$.
We first observe that the finite element formulation \eqref{FEM0} is consistent up to missing interface integrals. To see this, we give sense to the solution components $\bu_i$ and $p_i$ not just on $\Gamma_i$ but in the neighborhood $\Omega^i_h$. To this end, we first consider Stein's~\cite{stein2016singular} extensions of $\bu_i$, $p_i$ to some $E\bu_i\in H^1(\widehat\Gamma_i)^3$ and $Ep_i\in H^3(\widehat\Gamma_i)$ such that
\begin{equation}\label{Ext1}
E\bu_i=\bu_i~\text{on}~\Gamma_i,~~Ep_i=p_i~\text{on}~\Gamma_i\quad\text{and}\quad
\|E\bu_i\|_{H^1(\widehat\Gamma_i)}\le C\|\bu_i\|_{H^1(\Gamma_i)},~~
\|E p_i\|_{H^3(\widehat\Gamma_i)}\le C\|p_i\|_{H^3(\Gamma_i)}
\end{equation}
with some finite $C$ depending only on $\Gamma_i$. 
Now we define normal extensions in $\Omega^i_h$, $p^e_i=Ep_i\circ\bp$, where $\bp$ is the closest point projection on  $\widehat\Gamma_i$,  and similar we define $\bu_i^e$. From the properties of the normal extension 
we get $p^e_i\in H^3(\Omega_h^i)$, $\bu_i^e\in H^1(\Omega_h^i)^3$ and for the norms it holds $\|p_i^e\|^2_{H^3(\Omega_h^i)}\le C\,h\,\|Ep_i\|^2_{H^3(\widehat{\Gamma}_i)}$,   $\|\bu_i^e\|^2_{H^1(\Omega_h^i)}\le C\,h\,\|E\bu_i\|^2_{H^1(\widehat{\Gamma}_i)}$; see, e.g., \cite{reusken2015analysis}. Combining this with \eqref{Ext1} gives
\begin{equation}\label{ExtBound}
\|\bu_i^e\|_{H^1(\Omega_h^i)}\le C\,h\,\|\bu_i\|_{H^1(\Gamma_i)},~~
\|p_i^e\|_{H^2(\Omega_h^i)}\le C\,h\,\|p_i\|_{H^2(\Gamma_i)},
\end{equation}
with some finite $C$ depending only on $\Gamma_i$. If no confusion arises, we further identify $\bu$ and $p$ with their extensions defined above.

For handling  fracture junctions terms, we also consider the $h$-neighborhood of $\partial\Gamma_i$ in $\widehat{\Gamma}_i$, denoted by $\mathcal{O}(\partial\Gamma_i)$. Then it holds (see, Lemma~4.10 in \cite{ER2013})
\begin{equation}\label{stripEst}
\|v\|_{\mathcal{O}(\partial\Gamma_i)}^2\le C\,h\,\|v\|_{H^1(\widehat{\Gamma}_i)}^2\quad\text{for}~v\in H^1(\widehat{\Gamma}_i).
\end{equation}
We can always assume that $\mathcal{O}(\partial\Gamma_i)$ is wide enough so that $T\cap\widehat\Gamma_i\subset\mathcal{O}(\partial\Gamma_i)$ for all $T\in \T_h$ such that $T\cap\partial\Gamma_i\neq\emptyset$.

Now the normal volume stabilization terms make sense (and equals zero) for $\bu$ and $p$, and we see that  the piecewise smooth solution $\bu,p$ to the network Darcy problem \eqref{Darcy}--\eqref{bc} satisfies  the equality
\begin{equation}\label{consist}
a(\bu,p;\bv,q)=f(\bv,q)-\mbox{E}(\bu;q)\quad\forall\, \bv\in \bU,\,q\in Q,
\end{equation}
with
\[
\mbox{E}(\bu;q)=
\sum_{e\in E}\frac2{M_e}\sum_{k=1}^{M_e-1}\sum_{\ell=k}^{M_e}(\bm_{i_k}\cdot\bu_{i_k}-\bm_{i_\ell}\cdot\bu_{i_\ell},q_{i_k}-q_{i_\ell})_{e}.
\]
Using the Cauchy--Schwartz and triangle inequalities, the definition of the $\|\cdot\|_\ast$ norm on $Q$ and $|\bm_i|=1$, one readily checks the upper bound,
\begin{equation}\label{contE}
\mbox{E}(\bu;q)\le C\, \left(\sum_{e\in E}\sum_{k=1}^{M_e}h^2\|\bu_{i_k}\|_e^2 \right)^{\frac12}\|q\|_\ast,
\end{equation}
with a constant $C$ depending only on $\Gamma$.
\smallskip

We proceed with the interpolation bounds.
\begin{lemma}
Let $\bu\in\bigotimes_{i=1}^N H^1(\Gamma_i)^3$,  $p \in \bigotimes_{i=1}^N H^3(\Gamma_i)$.  Assume $h\le h_0$, where $h_0$ may depend on $\Gamma$, then it holds
\begin{equation}\label{interp}
\inf\limits_{\bw_h\in \bU_h,\,\xi_h\in Q_h}
\|\bu - \bw_h,p - \xi_h\| \le C h(\|\bu \|_{1} +\|p\|_{3}),
\end{equation}
with a constant $C$ independent on how $\Gamma$ intersects with $\T_h$.
\end{lemma}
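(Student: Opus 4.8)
The plan is to exhibit a near-optimal pair $(\bw_h,\xi_h)$ by taking, on each $\Omega_h^i$, the nodal interpolant (a Scott--Zhang quasi-interpolant where only $H^1$ regularity is available) of the normal extensions supplied by \eqref{Ext1}--\eqref{ExtBound}, i.e.\ $\bw_{h,i}=I_h\bu_i^e$ and $\xi_{h,i}=I_h p_i^e$, and then to estimate separately the five contributions to $\|\bu-\bw_h,p-\xi_h\|$: the weighted $L^2$ velocity term and the weighted $L^2$ pressure-gradient term on $\Gamma$, the two normal-volume stabilization terms on $\bigcup_i\Omega_h^i$, and the junction penalty term on $E$. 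Throughout I identify $\bu_i,p_i$ with their extensions, so that $\bn_i\cdot\nabla\bu_i^e=0$ and $\bn_i\cdot\nabla p_i^e=0$; this makes the stabilization contributions of the exact solution vanish and reduces them to the interpolation error.

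For the two surface terms I would use the standard Trace FEM route: on every bulk cell $T$ cut by $\Gamma_i$ apply the scaled trace inequality $\|v\|_{L^2(\Gamma_i\cap T)}^2\le C(h^{-1}\|v\|_{L^2(T)}^2+h\|\nabla v\|_{L^2(T)}^2)$ to $v=p_i^e-I_hp_i^e$ (respectively to its gradient and to the velocity error), insert the cellwise bulk interpolation estimates, sum over cells, and convert the resulting bulk norms of the normal extensions into surface norms using the $O(h)$ width of the cut region together with \eqref{ExtBound}. This yields $\|K^{-1/2}(\bu-\bw_h)\|_{\Gamma}\le Ch\|\bu\|_{H^1(\Gamma)}$ and $\|K^{1/2}\nabla(p-\xi_h)\|_{\Gamma}\le Ch\|p\|_{H^2(\Gamma)}$. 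For the normal-volume terms I would write $\bn_i\cdot\nabla(\bu_i-\bw_{h,i})=\bn_i\cdot\nabla(\bu_i^e-I_h\bu_i^e)$ (and likewise for $p$), bound it by the full bulk gradient of the interpolation error, and apply \eqref{ExtBound}; the prefactor $h^{1/2}$ in the norm then renders both contributions $O(h)$.

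The genuinely delicate term, and the reason for the $H^3$ hypothesis, is the junction penalty $\sum_{e}\frac{\rho_e}{h^2}\sum_{k<\ell}\|(p-\xi_h)_{i_k}-(p-\xi_h)_{i_\ell}\|_e^2$. Here I would first invoke the pressure continuity \eqref{cont_v}, $p_{i_k}=p_{i_\ell}$ on $e$, together with the fact that, since $e$ lies on both $\widehat\Gamma_{i_k}$ and $\widehat\Gamma_{i_\ell}$, the closest-point projections act as the identity on $e$ and both normal extensions restrict to the common trace $p|_e$. This lets me rewrite the jump as a difference of interpolation errors, $(p-\xi_h)_{i_k}-(p-\xi_h)_{i_\ell}=(p_{i_k}^e-I_hp_{i_k}^e)-(p_{i_\ell}^e-I_hp_{i_\ell}^e)$ on $e$, so that by the triangle inequality it suffices to prove the per-branch edge estimate $\|p_i^e-I_hp_i^e\|_{L^2(e)}\le Ch^2\|p_i\|_{H^3(\Gamma_i)}$; squared and weighted by $h^{-2}$ this delivers exactly the required $O(h^2)$.

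To obtain this edge estimate I would treat $e$ as part of the boundary curve $\partial\Gamma_i$ and apply the scaled trace inequality from the two-dimensional surface to its one-dimensional edge on each surface cell $\Gamma_i\cap T$ with $e\cap T\neq\emptyset$, giving $\|w\|_{L^2(e)}^2\le C(h^{-1}\|w\|_{L^2(S_e)}^2+h\|\nabla_\Gamma w\|_{L^2(S_e)}^2)$ with $w=(p_i^e-I_hp_i^e)|_{\Gamma_i}$ and $S_e$ the surface strip of width $O(h)$ adjacent to $e$. Inserting the cellwise trace-plus-interpolation bounds produces the Hessian norm of $p_i^e$ localized to the bulk tube around $e$, which projects to $\|D^2 p_i\|$ on the boundary strip $\mathcal{O}(\partial\Gamma_i)$. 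The crucial final step is \eqref{stripEst} applied to the second derivatives, $\|D^2 p_i\|_{\mathcal{O}(\partial\Gamma_i)}^2\le Ch\|p_i\|_{H^3(\Gamma_i)}^2$, which supplies one extra power of $h$ and upgrades the naive $O(h^{3/2})$ bound that any purely $H^2$-based argument can give into the needed $O(h^2)$. This localization-to-a-strip step is the main obstacle: it is precisely where the superpenalty scaling $h^{-2}$, the absence of a continuous interpolant across the junction, and the extra regularity $p_i\in H^3(\Gamma_i)$ interact, and it has no counterpart either in the junction-free analysis of \cite{hansbo2017stabilized} or in the superpenalty DG estimates of \cite{arnold2002unified}. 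Collecting the five bounds then gives \eqref{interp}.
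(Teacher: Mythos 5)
Your proposal is correct, and its skeleton coincides with the paper's proof: the same quasi-interpolant of the normal extensions, the same term-by-term decomposition of the norm, the same reduction of the junction penalty to per-branch edge errors via the pressure continuity \eqref{cont_v} (using that the normal extensions restrict to the common trace on $e$), and the same target estimate $\|p-\xi_h\|_{L^2(e)}^2\le Ch^4\|p\|_{H^3(\Gamma_i)}^2$ obtained from the chain ``interpolation $+$ trace gives $h^2$, tube-to-strip reduction via \eqref{ExtBound} gives $h$, and \eqref{stripEst} applied to second derivatives gives the final $h$, which is where $H^3$ enters'' --- exactly the paper's \eqref{aux5}. The one genuine difference is the trace tool that carries the bulk interpolation error down to the one-dimensional edge. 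The paper proves a dedicated codimension-two cut trace inequality \eqref{eq_trace2}, $\|v\|_{L^2(T\cap\partial\Gamma_i)}^2\le C(h_T^{-2}\|v\|_{L^2(T)}^2+\|\nabla v\|_{L^2(T)}^2+h_T^2\|D^2v\|_{L^2(T)}^2)$, stated per cell with a constant uniform in the cut; this is the technical heart of the argument and occupies the entire Appendix (Stein extension on a reference cell plus scaling). You instead factor the codimension-two trace into two codimension-one traces: a collar (strip) trace inequality on the plane $\widehat\Gamma_i$, bounding $\|w\|_{L^2(e)}^2$ by $h^{-1}\|w\|_{L^2(S_e)}^2+h\|\nabla_\Gamma w\|_{L^2(S_e)}^2$ over the $O(h)$-wide strip $S_e$, composed with the standard cut-cell trace inequality \eqref{eq_trace} on full bulk cells. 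The bookkeeping of powers of $h$ then comes out identically, so your route does deliver \eqref{interp}. What your route buys is that \eqref{eq_trace2} and the Appendix are not needed at all, only elementary tools; what it costs is that the strip inequality leans on the planarity of $\widehat\Gamma_i$ and the Lipschitz character of $e$, with a constant depending on the geometry of $e$ (acceptable here, since the error analysis already restricts to planar fractures and all constants are allowed to depend on $\Gamma$). One caution: your phrase ``on each surface cell $\Gamma_i\cap T$'' must not be read literally --- a trace constant on an arbitrarily thin cut sliver $\Gamma_i\cap T$ cannot be uniform in the cut, which is precisely the difficulty that per-cell inequalities like \eqref{eq_trace} and \eqref{eq_trace2} are engineered to bypass; the inequality you actually display and use, formulated globally over the strip $S_e$, is the sound version, and you should state it (and its elementary one-dimensional integration proof, or a bi-Lipschitz flattening argument) explicitly rather than cell by cell.
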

\begin{proof} The analysis of the interpolation properties of the Trace FEM is commonly based on the local trace inequality, see e.g. \cite{reusken2015analysis,burman2016cutb}, which in our case takes the form:
\begin{equation}\label{eq_trace}
\|v\|_{L^2(T\cap \widehat\Gamma_i)}^2\le C( h^{-1}_T\|v\|_{L^2(T)}^2+h_T\|\nabla v\|_{L^2(T)}^2)\quad \forall~v\in H^1(T)~~ \text{and}~ T\in\Omega_h^i,
\end{equation}
where $h_T=\text{diam}(T)$ and $C$ is independent of $T$, $v$ and how $ \widehat\Gamma_i$ cuts through $T$. A quick proof of \eqref{eq_trace}  consists in dividing the cubic cell $T$ into a finite number of regular tetrahedra
and further applying Lemma~4.2 from \cite{Hansbo} on each of these tetrahedra. For handling edge terms, we need the extension of \eqref{eq_trace} for \emph{curves} cutting through the mesh. More precisely, we need the following inequality:
\begin{equation}\label{eq_trace2}
\|v\|_{L^2(T\cap \partial\Gamma_i)}^2\le C( h^{-2}_T\|v\|_{L^2(T)}^2+\|\nabla v\|_{L^2(T)}^2+h_T^2\|D^2 v\|_{L^2(T)}^2)\quad \forall~v\in H^2(T)~~ \text{and}~ T\in\Omega_h^i,~T\cap \partial\Gamma_i\neq\emptyset,
\end{equation}
where $C$ is independent of $T$, $v$ and how $ \partial\Gamma_i$ cuts through $T$. We provide the proof of \eqref{eq_trace2} in Appendix.

We recall that $\bu_i\in H^1(\Gamma_i)^3$ and $p_i\in H^3(\Gamma_i)$ are identified with their extensions to $\Omega_h^i$ such that \eqref{ExtBound} holds.
Let  $\bw_h=I_h\big(\bu\big)\in\bU_h$, $q_h=I_h(p)\in Q_h$ be the  finite element (Clement) interpolants.
Let us first treat the  edge term in the definition of the $\|p-q_h\|_\ast$:
Let  $e\subset\partial\Gamma_i$ for $e\in E$.
Using interpolation properties of bilinear polynomials and \eqref{eq_trace2},  we  have
for any $T\in\Omega_h^i,~T\cap e\neq\emptyset$:
\begin{equation*}
 \|p-q_h\|_{L^2(T\cap e)}^2\le C( h^{-2}_T\|p-q_h\|_{L^2(T)}^2+\|\nabla (p-q_h)\|_{L^2(T)}^2+h_T^2\|D^2 (p-q_h)\|_{L^2(T)}^2)\le C h_T^2\|p\|_{H^2(T)}^2.
\end{equation*}
Summing up the above inequality over all $T$ intersecting $e$ (the domain formed by all such cells is denoted by $\widetilde{\Omega}_h^i$), we get
\begin{equation}\label{aux5}
 \|p-q_h\|_{L^2(e)}^2\le  C h^2\|p\|_{H^2(\widetilde{\Omega}_h^i)}^2 \le  C h^3\|p\|_{H^2(\mathcal{O}(\partial\Gamma_i))}^2\le  C h^4\|p\|_{H^3(\Gamma_i)}^2.
\end{equation}
Here we used \eqref{ExtBound} (which remains true with $\widetilde{\Omega}_h^i$ and $\mathcal{O}(\partial\Gamma_i)$ instead of ${\Omega}_h^i$ and $\Gamma_i$) and \eqref{stripEst}.
For the rest of $\|p-q_h\|_\ast$ we use interpolation properties of bilinear polynomials, \eqref{eq_trace} and \eqref{ExtBound} to obtain
\begin{equation}\label{aux5.5}
\begin{split}
\|\MO{K_i}\nabla (p-q_h)\|^2_{\Gamma_i}&\le C\sum_{T\in\Omega_h^i}( h^{-1}_T\|\nabla (p-q_h)\|_{L^2(T)}^2+h_T\|D^2 (p-q_h)\|_{L^2(T)}^2)\\
&
\le C\sum_{T\in\Omega_h^i}h_T\|p\|_{H^2(T)}^2 = C h\|p\|_{H^2(\Omega_h^i)}^2 \le C h^2\|p\|_{H^2(\widehat\Gamma_i)}^2 \le C h^2\|p\|_{H^2(\Gamma_i)}^2.
\end{split}
\end{equation}
and
\begin{equation}\label{aux6}
\rho_p h\|\bn_i\cdot\nabla  (p-q_h)\|^2_{\Omega_h^i} 
\le C h^3\|p\|_{H^2(\Omega_h^i)}^2 \le C h^4\|p\|_{H^2(\widehat\Gamma_i)}^2\le C h^4\|p\|_{H^2(\Gamma_i)}^2.
\end{equation}
Estimates \eqref{aux5}--\eqref{aux6} lead to the desired bound on $\|p-q_h\|_\ast$.
Similar to \eqref{aux5.5}--\eqref{aux6}, we use interpolation properties of bilinear polynomials, \eqref{eq_trace} and \eqref{ExtBound}, to obtain
\[
\|\bu-\bw_h\|_\ast \le C h \sum_{i=1}^{N}\|\bu\|_{H^1(\Gamma_i)} \le C h \|\bu \|_{1},
\]
with a constant $C$ independent of $h$ and how $\Gamma$ intersects the background mesh.\\
\end{proof}

Since for each component $\Gamma_i$ we associate with $\bu_i$ and $p_i$ there  extensions to $\Omega_h^i$,  the error functions $\bu-\bu_h$ and $p-p_h$ are well-defined as functions of $\bU$ and $Q$. We are ready to prove the following convergence result.

\begin{theorem} \label{thm2}  Let $(\bu,p)$ be the solution of \eqref{Darcy}--\eqref{bc} and assume that $\bu\in\bigotimes_{i=1}^N H^1(\Gamma_i)^3$,  $p \in \bigotimes_{i=1}^N H^3(\Gamma_i)$. Let  $(\bu_h,p_h) \in \bU_h\times Q_h$ be the solution  of \eqref{FEM0}. The following discretization error bound holds:
\begin{equation}\label{discrbound}
 \|\bu-\bu_h,p  - p_h \|  \le C h(\|\bu \|_{1} + \|p\|_{3}).
\end{equation}
Here $\|\cdot\|_k$, $k=1,3$ denotes the broken Sobolev spaces norms for $\bigotimes_{i=1}^N H^k(\Gamma_i)$; the constant $C$ depends on $\Gamma$, but not on how $\Gamma$ intersects the background mesh.
\end{theorem}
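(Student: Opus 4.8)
The plan is to run the standard estimate for a conforming but inconsistent Galerkin method (a Strang-type argument), combining the coercivity \eqref{coer}, the continuity \eqref{contA}, the consistency identity \eqref{consist}--\eqref{contE}, and the interpolation bound \eqref{interp}. Let $(\bw_h,\xi_h)\in\bU_h\times Q_h$ be the interpolants furnished by the Lemma, with $\xi_h$ chosen to carry the same discrete Dirichlet data on $\dG_D$ as $p_h$, so that $\epsilon_h:=p_h-\xi_h$ vanishes on $\dG_D$ and $(\be_h,\epsilon_h):=(\bu_h-\bw_h,\,p_h-\xi_h)$ is an admissible test pair for \eqref{FEM0}. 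First I would split by the triangle inequality
\[
\|\bu-\bu_h,\,p-p_h\|\le \|\bu-\bw_h,\,p-\xi_h\|+\|\be_h,\epsilon_h\|,
\]
bounding the approximation term at once by \eqref{interp}, and reserving the work for the discrete error $\|\be_h,\epsilon_h\|$.

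For the discrete term I start from coercivity, $\|\be_h,\epsilon_h\|^2\le a(\be_h,\epsilon_h;\be_h,\epsilon_h)$, and insert $\pm(\bu,p)$ to split $a(\be_h,\epsilon_h;\be_h,\epsilon_h)=a(\bu_h-\bu,\,p_h-p;\be_h,\epsilon_h)+a(\bu-\bw_h,\,p-\xi_h;\be_h,\epsilon_h)$. Testing the finite element equation \eqref{FEM0} and the consistency identity \eqref{consist} against $(\be_h,\epsilon_h)$ and subtracting shows that the first group equals the consistency defect $\mbox{E}(\bu;\epsilon_h)$, yielding the quasi-orthogonality relation
\[
\|\be_h,\epsilon_h\|^2\le \mbox{E}(\bu;\epsilon_h)+a(\bu-\bw_h,\,p-\xi_h;\be_h,\epsilon_h).
\]
The last summand is controlled by continuity \eqref{contA}, which bounds it by $2\|\bu-\bw_h,\,p-\xi_h\|\,\|\be_h,\epsilon_h\|$; after dividing through by $\|\be_h,\epsilon_h\|$ this again feeds in the interpolation estimate \eqref{interp}.

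The step requiring the most care, and the main obstacle, is the consistency defect $\mbox{E}(\bu;\epsilon_h)$. Here I would invoke \eqref{contE}, which already carries the favourable $h^2$ weight, and reduce the remaining velocity factor to a trace estimate: the trace inequality \eqref{Trace2} applied componentwise gives $\|\bu_{i_k}\|_e\le C\|\bu_{i_k}\|_{H^1(\Gamma_i)}$ for every edge $e\subset\dG_{i_k}$, whence
\[
\Big(\sum_{e\in E}\sum_{k=1}^{M_e}h^2\|\bu_{i_k}\|_e^2\Big)^{\frac12}\le C\,h\,\|\bu\|_{1},
\]
so that $\mbox{E}(\bu;\epsilon_h)\le C\,h\,\|\bu\|_{1}\,\|\epsilon_h\|_\ast\le C\,h\,\|\bu\|_{1}\,\|\be_h,\epsilon_h\|$. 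Dividing by $\|\be_h,\epsilon_h\|$ and combining the two bounds gives $\|\be_h,\epsilon_h\|\le C\,h(\|\bu\|_{1}+\|p\|_{3})$, and the triangle inequality then delivers \eqref{discrbound}. The conceptual point is that the dropped junction integrals do not spoil the optimal order precisely because of the over-penalization: the $\rho_e/h^2$ scaling in $\|\cdot\|_\ast$ forces pressure jumps across junctions to be $O(h)$ in the norm, which is exactly what supplies the $h^2$ weight in \eqref{contE} and converts an apparently $O(1)$ defect into an $O(h)$ contribution. Note also that this defect needs only $\bu\in H^1$ on each $\Gamma_i$, whereas the regularity $\|p\|_{3}$ enters solely through the interpolation bound \eqref{interp}.
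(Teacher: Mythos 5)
Your proposal is correct and follows essentially the same route as the paper's proof: coercivity plus the quasi-orthogonality obtained from \eqref{FEM0} and \eqref{consist}, continuity \eqref{contA} for the interpolation part, the bound \eqref{contE} combined with the trace inequality \eqref{Trace2} for the consistency defect $\mbox{E}(\bu;\cdot)$, and finally \eqref{interp} with the triangle inequality. Your explicit care that $\epsilon_h = p_h-\xi_h$ vanishes on $\dG_D$ (so that it is an admissible test function) is a detail the paper leaves implicit, but it does not change the argument.
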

\begin{proof}
Using the coercivity  and consistency properties in \eqref{coer} and \eqref{consist} as well as continuity estimates for the $a$ and $\mbox{E}$ forms \eqref{contA}, \eqref{contE},  we obtain, for arbitrary $(\bw_h,\xi_h) \in \bU_h \times Q_h$:
\begin{align*}
 \|\bu_h- \bw_h,p_h- \xi_h\|^2 &\le a(\bu_h- \bw_h,p_h- \xi_h;\bu_h- \bw_h,p_h- \xi_h)  \\
 & = a(\bu- \bw_h,p- \xi_h;\bu_h- \bw_h,p_h- \xi_h)-\mbox{E}(\bu;p_h- \xi_h)\\
 &\le C\|\bu - \bw_h,p - \xi_h\| \|\bu_h - \bw_h,p_h - \xi_h\| + C\, \left(\sum_{e\in E}\sum_{k=1}^{M_e}h^2\|\bu_{i_k}\|_e^2 \right)^{\frac12}\|p_h- \xi_h\|_\ast.
\end{align*}
Therefore, after cancellation and using the trace inequality \eqref{Trace2} we get
\begin{equation}\label{aux4}
\begin{split}
\|\bu_h- \bw_h,p_h- \xi_h\|&\le C\left(\|\bu - \bw_h,p - \xi_h\| + \left(\sum_{e\in E}\sum_{k=1}^{M_e}h^2\|\bu_{i_k}\|_e^2 \right)^{\frac12} \right)\\
&\le C\left(\|\bu - \bw_h,p - \xi_h\|+ h \sum_{i=1}^{N}\|\bu\|_{H^1(\Gamma_i)}\right).
\end{split}
\end{equation}
For  $(\bw_h,\xi_h)\in \bU_h \times Q_h$ we take optimal finite element interpolants for the (normal extensions of the) solution  $\bw_h=I_h\big(\bu\big)$, $q_h=I_h(p)$.
Now, the triangle inequality, \eqref{aux4} and \eqref{interp} leads to  \eqref{discrbound}:
\[
\|\bu-\bu_h,p  - p_h \|\le \|\bu_h- \bw_h,p_h- \xi_h\|+\|\bu - \bw_h,p - \xi_h\|\le  C h (\|\bu \|_{1} +\|p\|_{2}).
\]
\end{proof}

\begin{remark}[$O(h^2)$ convergence] \rm
In \cite{hansbo2017stabilized} the trace finite element method as in \eqref{FEM} applied on a smooth closed surface $\Gamma$ was  proved to enjoy higher convergence in weaker norms for the pressure and fluxes.  In our setting, this would mean the estimate
\begin{equation}\label{HigherOrder}
\|p- p_h\|_{\Gamma}\le C\,h\,\|\bu-\bu_h,p  - p_h \|=O(h^2)
\end{equation}
for the pressure error. The convergence leverage argument, as usual, is based on the $H^2$- regularity estimate for the solution of the dual problem, which is the same system of Darcy equations in the fracture network in our case.
However, we are not aware of a suitable regularity results for the case of intersecting fractures. Namely, we would need
the estimate of the norm $\|p\|_{2}+\|\bu\|_{1}\le C \|\blf\|_1+\|g\|_\Gamma$, again $\|\cdot\|_{k}$ are norms on the broken Sobolev spaces.
Moreover, the studies in \cite{bruce1974poisson} of the Poisson problem posed in a domain with intersecting interfaces
suggest that this higher regularity results might not hold in our case.
\end{remark}


\section{Numerical results and discussion}\label{s_numer}
This section collects several numerical examples, which demonstrate the accuracy and capability of our unfitted  finite element method. To verify the convergence rates of the method,  we start with a few examples where exact solution is known. This includes the case of planar intersecting fractures and Darcy flow along curvilinear surfaces.
Further we include an example of a pressure drop driven flow in a more complex network of fractures.

\subsection{A multiple fracture problem with a synthetic solution}
To test the convergence of the method, we first consider the example of an analytically prescribed solution on the two intersecting fractures; \MO{the test is built on an example} from~\cite{brenner2016gradient}. The setup is  given below.


\begin{figure}[ht!]
\begin{center}
\includegraphics[width=0.42\textwidth]{./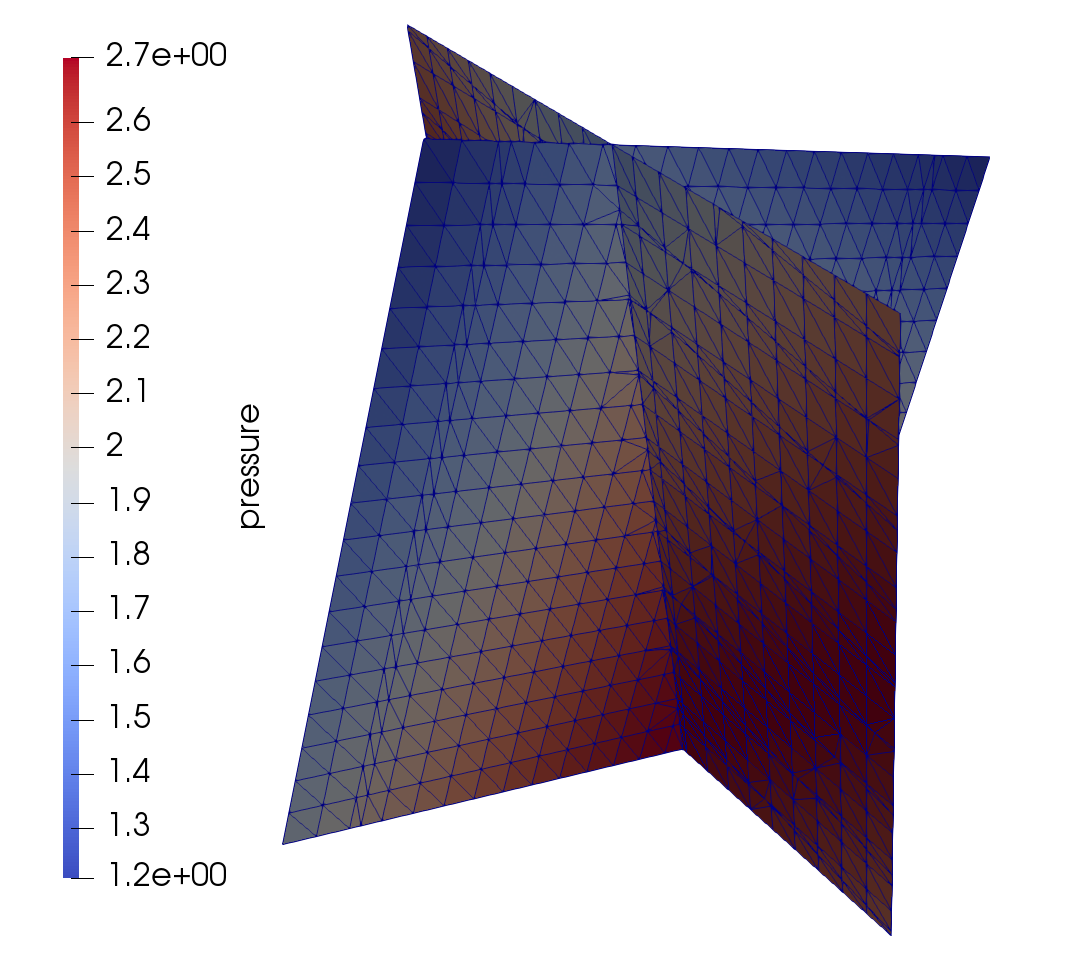}\qquad
\includegraphics[width=0.38\textwidth]{./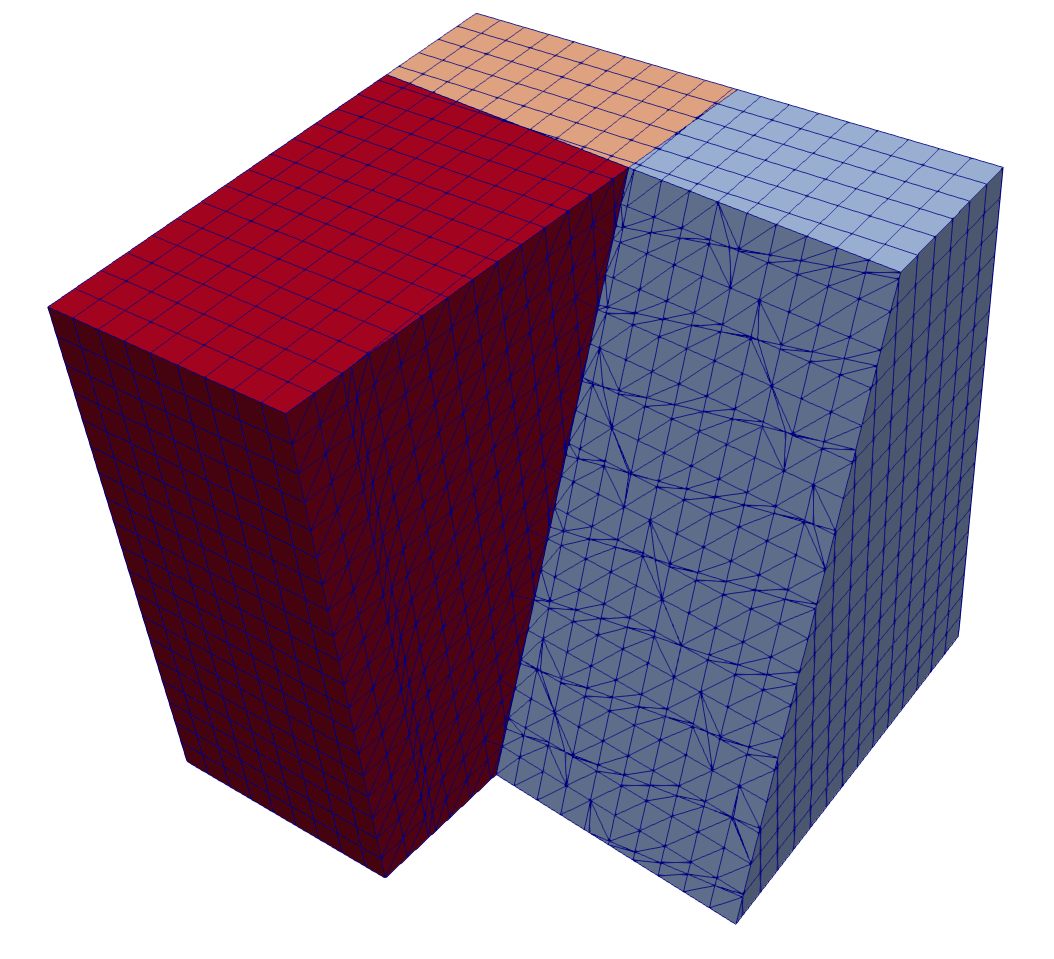}
\caption{\label{fig:rotated_frac2}  (Left) Second level surface mesh and (Right) part of the bulk mesh intersected by the surface for the Example~\ref{exam3} with $\alpha=24^o$, $\beta=4^o$.}
\end{center}
\end{figure}

\begin{figure}[ht!]
\begin{center}

\includegraphics[width=0.32\textwidth]{./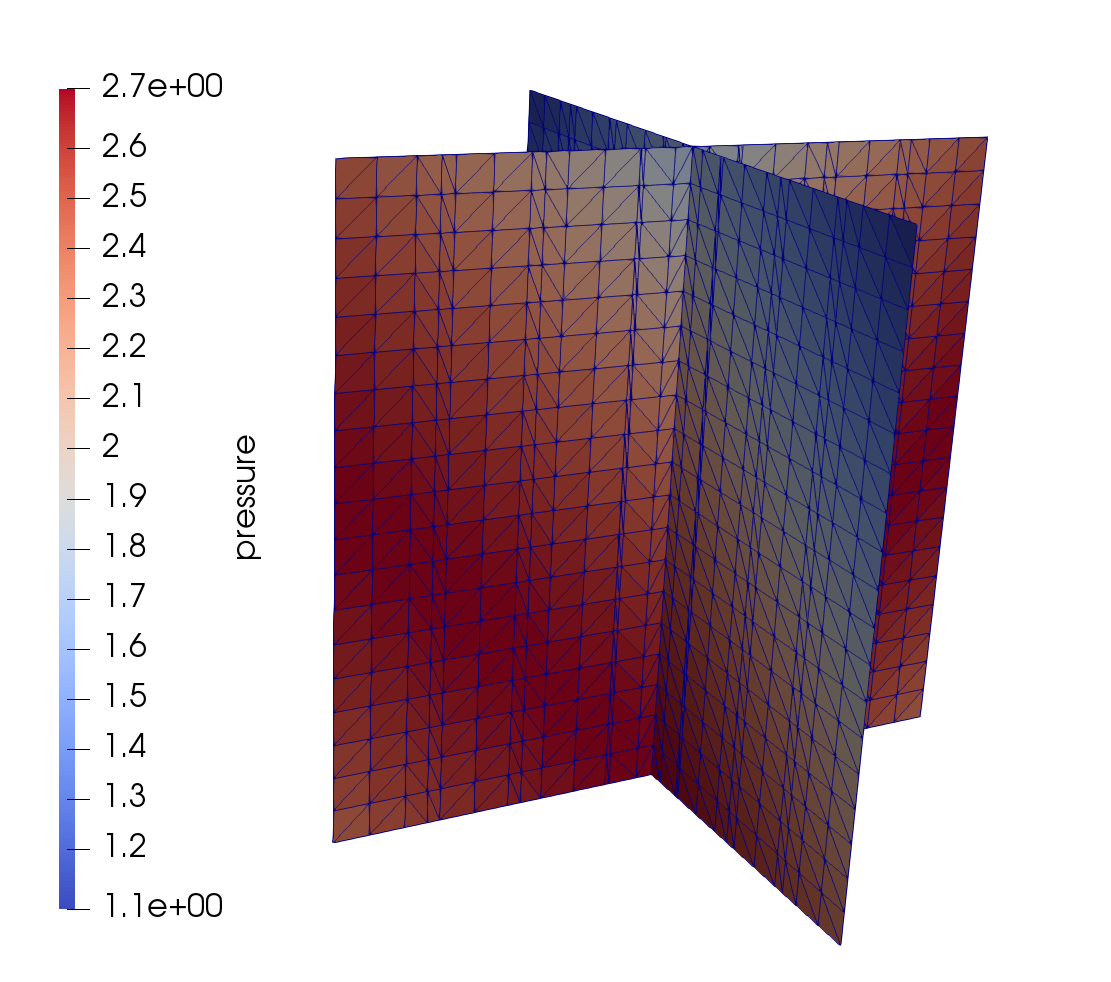}\quad
\includegraphics[width=0.28\textwidth]{./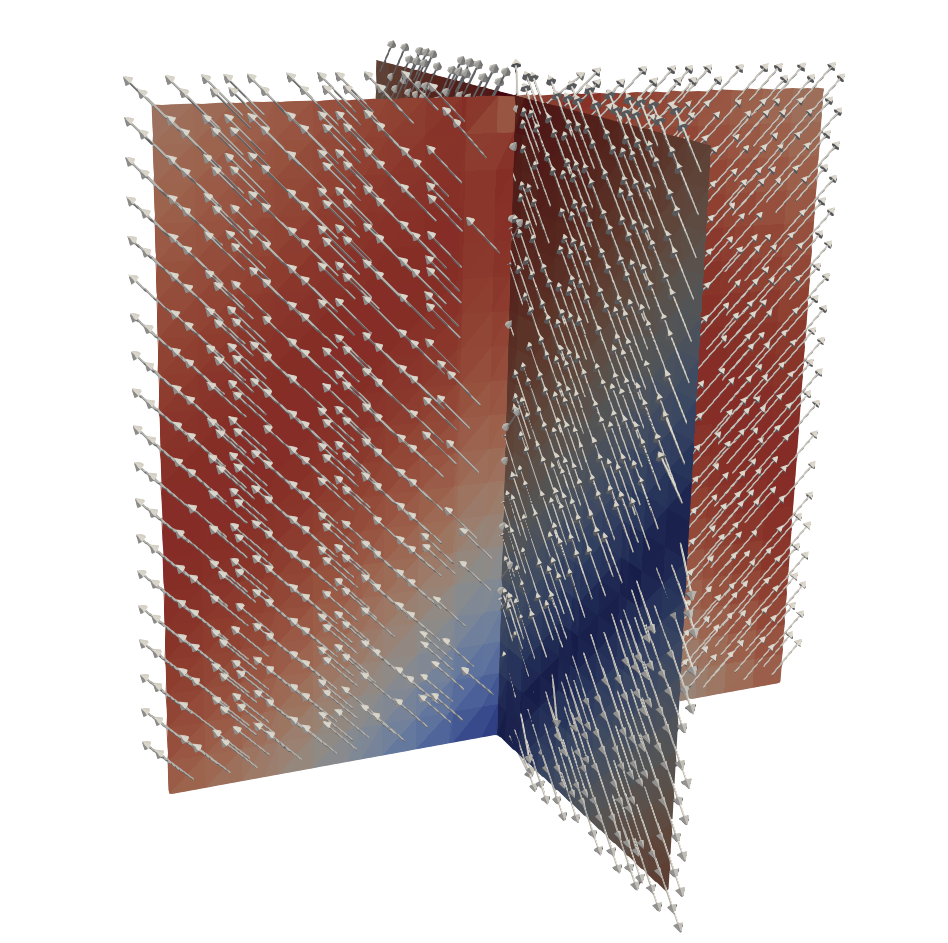}\quad
\includegraphics[width=0.3\textwidth]{./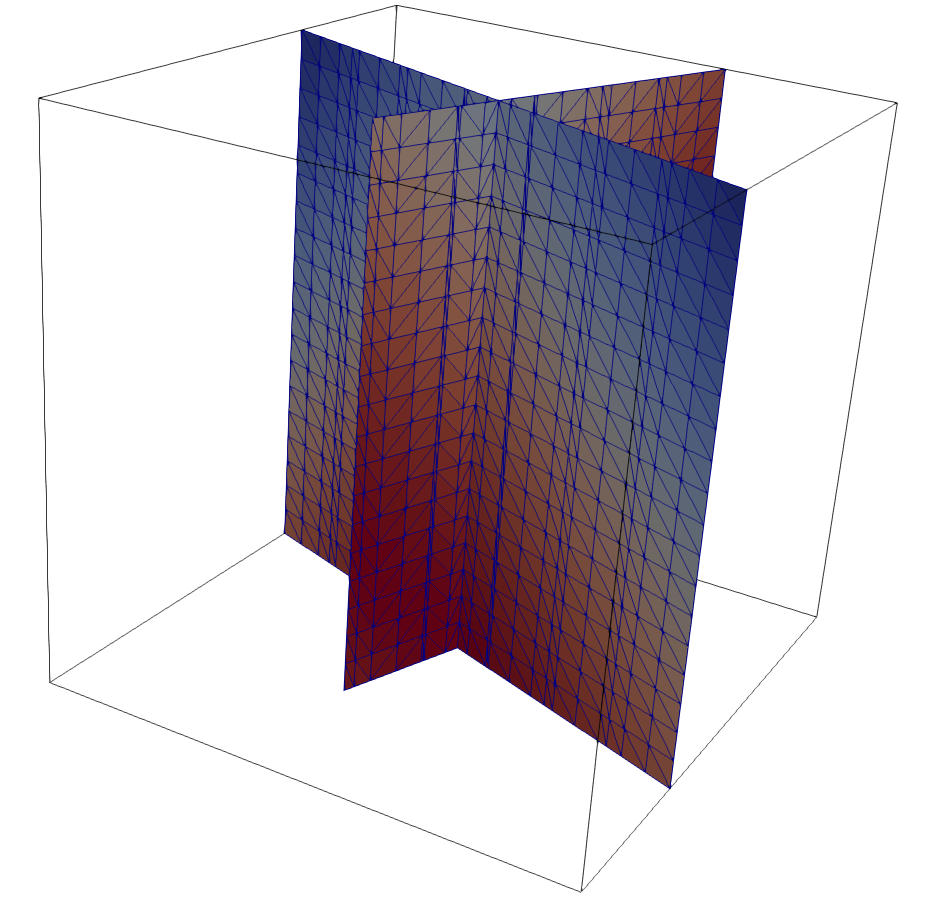}
\caption{\label{fig:rotated_frac}  (Left) The  numerical solution (pressure) and surface mesh from Example~\ref{exam3} with $\alpha=20^o$, $\beta=0^o$. (Center)  The velocity field. (Right) The network and mesh for the case when a part of the fracture's boundary is immersed.}
\end{center}
\end{figure}

\begin{example}\label{exam3}\rm
Consider $\Omega = (0, 1)^3$. \MO{The initial configuration of} the fracture network $\Gamma$ is given by the union of the two rectangles $\{(x, y, z) \in \Omega \mid  x = 0.5\}$ and $\{(x, y, z) \in \Omega \mid  y = 0.5\}$ 
so that $\Gamma=\bigcup\limits_{i=1}^4\Gamma_i$.

To define the exact solution $(\mathbf{u}_i, p_i)$ \MO{for the initial configuration},  we first introduce the functions
$t_1(\mathbf{x}) =  y + z - 0.5$,
$t_2(\mathbf{x}) =  x + z - 0.5$,
$t_3(\mathbf{x}) = -y + z + 0.5$ and
$t_4(\mathbf{x}) = -x + z + 0.5$.
The pressure and the Darcy velocity in each fracture component are given by
\begin{equation}
 p_i(\mathbf{x}) = e^{\cos(t_i(\mathbf{x}))},\quad\text{and}\quad \mathbf{u}_i(\mathbf{x}) = -\sin (t_i(\mathbf{x})) e^{\cos(t_i(\mathbf{x}))} (d_x(i), d_y(i), 1)^T,
\end{equation}
where $i = 1,..,4$ and $d_x = (0, 1, 0, -1), d_y = (1, 0, -1, 0)$.
This $p$ and $\bu$ satisfy \eqref{Darcy}--\eqref{bc} with
 $\mathbf{f} = 0$ and
\[
 g_i = 2 (\cos(t_i(\mathbf{x})) - \sin^2(t_i(\mathbf{x}) ))  e^{\cos(t_i(\mathbf{x}))}.
\]
\end{example}


\begin{table}
\begin{center}
\caption{Errors norms  and convergence rates for Example~\ref{exam3} with $\alpha=20^o$, $\beta=0^o$. \label{tab:brenner1}}\smallskip
\small
\begin{tabular}{r|llllll}\hline
\#d.o.f. & $\|\mathbf{u} - \mathbf{u}_h\|_{L^2}$  & rate & $\|p - p_h\|_{L^2}$ & rate & $\|p - p_h\|_{L^\infty}$& rate \\ \hline\\[-2ex]
384&	  7.606e-2 &      &   4.602e-3 &        & 2.612e-2 & \\
1728&	  3.779e-2 & 1.01 &   1.371e-3 & 1.75   & 1.250e-2 & 1.06\\
7904&     2.081e-2 & 0.86 &   3.925e-4 & 1.80   & 6.118e-3 & 1.03 \\
33072&    1.095e-2 & 0.93 &   1.097e-4 & 1.84   & 3.006e-3  & 1.02 \\  \hline
\end{tabular}
\end{center}
\end{table}

\begin{table}
\begin{center}
\caption{Errors norms  and convergence rates for Example~\ref{exam3} with $\alpha=24^o$, $\beta=4^o$. \label{tab:brenner2}}\smallskip
\small
\begin{tabular}{r|llllll}\hline
\#d.o.f. & $\|\mathbf{u} - \mathbf{u}_h\|_{L^2}$  & rate & $\|p - p_h\|_{L^2}$ & rate & $\|p - p_h\|_{L^\infty}$& rate \\ \hline\\[-2ex]
354&	  9.451e-2 &      &   6.023e-3 &        & 3.738e-2 & \\
1766&	  4.446e-2 & 1.09 &   1.518e-3 & 1.99   & 2.310e-2 & 0.70\\
7320&     1.926e-2 & 1.20 &   2.879e-4 & 2.39   & 1.026e-2 & 1.17 \\
30388&    8.879e-3 & 1.13 &   8.149e-5 & 1.82   & 4.890e-3  & 1.07 \\  \hline
\end{tabular}
\end{center}
\end{table}

\begin{table}
\begin{center}
\caption{Errors norms and convergence rates for Example~\ref{exam3} with $\alpha=20^o$, $\beta=0^o$ and immersed part of the boundary. \label{tab:brenner3}}\smallskip
\small
\begin{tabular}{r|llllll}\hline
\#d.o.f. & $\|\mathbf{u} - \mathbf{u}_h\|_{L^2}$  & rate & $\|p - p_h\|_{L^2}$ & rate & $\|p - p_h\|_{L^\infty}$& rate \\ \hline\\[-2ex]
280&	  7.016e-2 &      &   4.275e-3 &        & 2.459e-2 & \\
1422&	  3.331e-2 & 0.93 &   1.182e-3 &  1.60  & 1.104e-2 & 0.99 \\
6612&     1.821e-2 & 0.88 &   3.419e-4 &  1.79  & 5.805e-3 & 0.92 \\
27924&    9.575e-3 & 0.96 &   9.388e-5 &  1.86  & 3.015e-3  & 0.95 \\  \hline
\end{tabular}
\end{center}
\end{table}

\MO{
To generate less regular intersections of the fractures and the junction line  with the bulk mesh, we next perform the deformation of the fracture system by applying counterclockwise rotations by the angle $\alpha$ about the axis $x = z = 0.5$ and by the angle $\beta$ about the axis $x = y = 0.5$.  The resulting fracture network is denoted by $\Gamma(\alpha, \beta)$. The corresponding change of variables is applied to prescribe the exact solution on $\Gamma(\alpha, \beta)$ using $(\mathbf{u}_i, p_i)$  defined above.
For numerical experiments,  we take $\alpha = 20^o$, $\beta = 0^o$ and $\alpha = 24^o$, $\beta = 4^o$.
}

We next consider a sequence of uniform tessellations of $\Omega$ into cubes  with  $h\in\{1/9,\, 1/19,\, 1/39,\,1/79\}$.
\MO{
The trace of the second level ($h = 1/19$) volumetric grid on $\Gamma(24^o,4^o)$ and the part of the volumetric grid intersected by the surface are illustrated in Fig.~\ref{fig:rotated_frac2}. The computed pressure and velocity for $\Gamma(20^o,0^o)$ are shown in Figure~\ref{fig:rotated_frac}.
Tables~\ref{tab:brenner1}--\ref{tab:brenner2} present the error norms for the computed finite element solutions. We measure the error in the $L^2(\Gamma)$ and $L^\infty(\Gamma)$ for the pressure and $L^2(\Gamma)^3$ for the velocity. 
The results show close to the second order convergence for the pressure $L^2$ norm (although we are unable to prove it) and the first order convergence for the velocity. The  $L^\infty$ norm of the error for the pressure also goes to zero  as $O(h)$.
}

\MO{Finally, we consider the case when a part of the fracture's boundary is immersed in the bulk as illustrated in Figure~\ref{fig:rotated_frac} (right). This corresponds to $\Gamma(20^o,0^o)$, with $\Gamma_2$ cut so that the immersed part of the boundary is vertical and the width of $\Gamma_2$ (i.e. the distance between the immersed boundary in the junction) is $0.25$.
The background mesh does not fit the immersed boundary. Hence, we impose pressure Dirichlet boundary condition using the penalty term as described in Remark~\ref{Rem_Bound}. The finite element error, reported in Table~\ref{tab:brenner3}, appears to be almost unaffected by the presence of the immersed boundary. }


\subsection{Darcy flow over curvilinear surfaces}
We now check if the fracture curvature influences convergence rates of the unfitted finite element method. To this end, we consider Darcy problem \eqref{Darcy}  defined on surface and on the torus.  In both examples, $\Gamma$ is given by closed smooth surfaces (no junctions).

\begin{example}\label{exam1}\rm
We consider $\Gamma=\{ \bx\in\R^3\mid \|\bx\|_2 = 1\}$ embedded in $\Omega= (-2,\,2)^3$.
The solution  $(\mathbf{u}, p)$ is given by
\[
    p(\bx)= \frac{a}{\|\bx\|^3}\left(3x_1^2x_2 - x_2^3\right),\qquad \mathbf{u} = -\nabla_\Gamma p, \quad
    \bx=(x_1,x_2,x_3)\in\Omega,
\]
with $a=12$. One verifies that $\bu$ and $p$ satisfy \eqref{Darcy} with $\mathbf{f} = 0$, and $g=12 p$, so that  $g$ satisfies the compatibility condition $\int_\Gamma g \, \rd \bs = 0$.
\end{example}

\begin{figure}[ht!]
\begin{center}
   \includegraphics[width=0.33\textwidth]{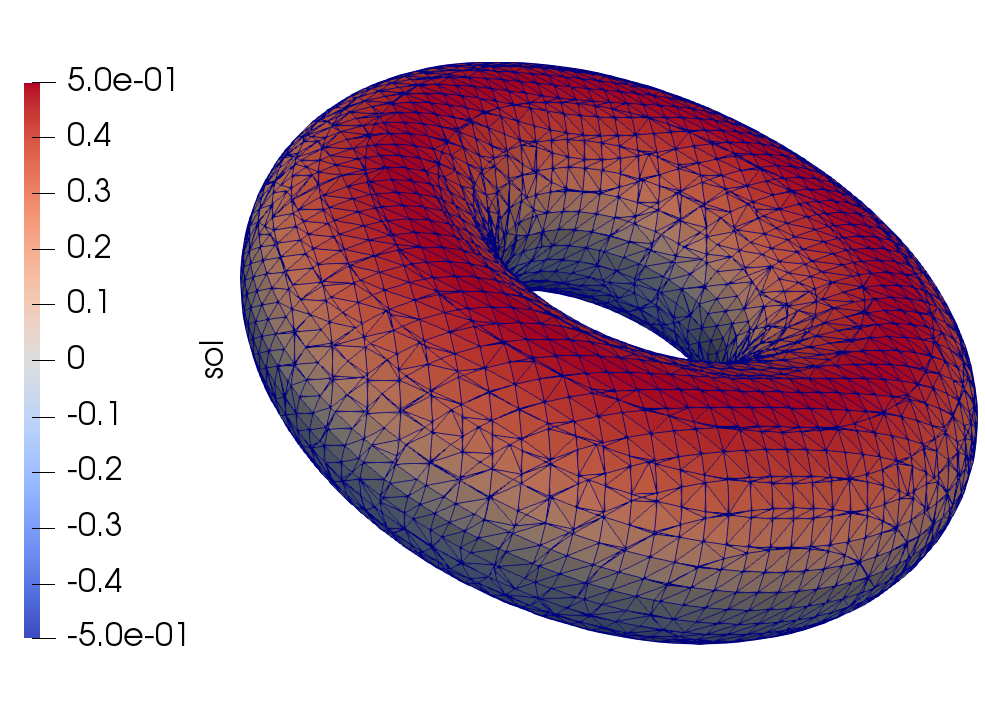}\quad
  \includegraphics[width=0.3\textwidth]{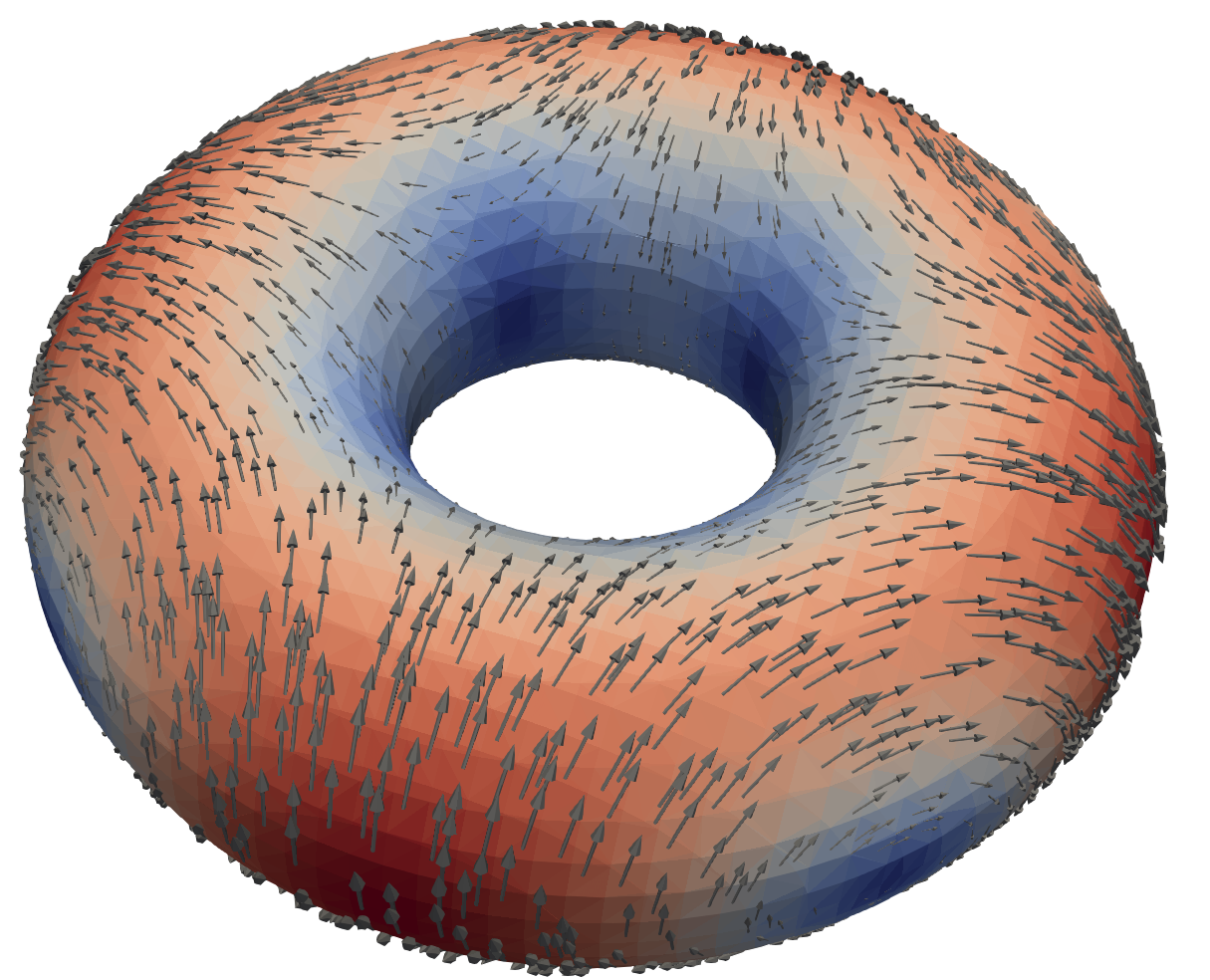}\quad
   \includegraphics[width=0.28\textwidth]{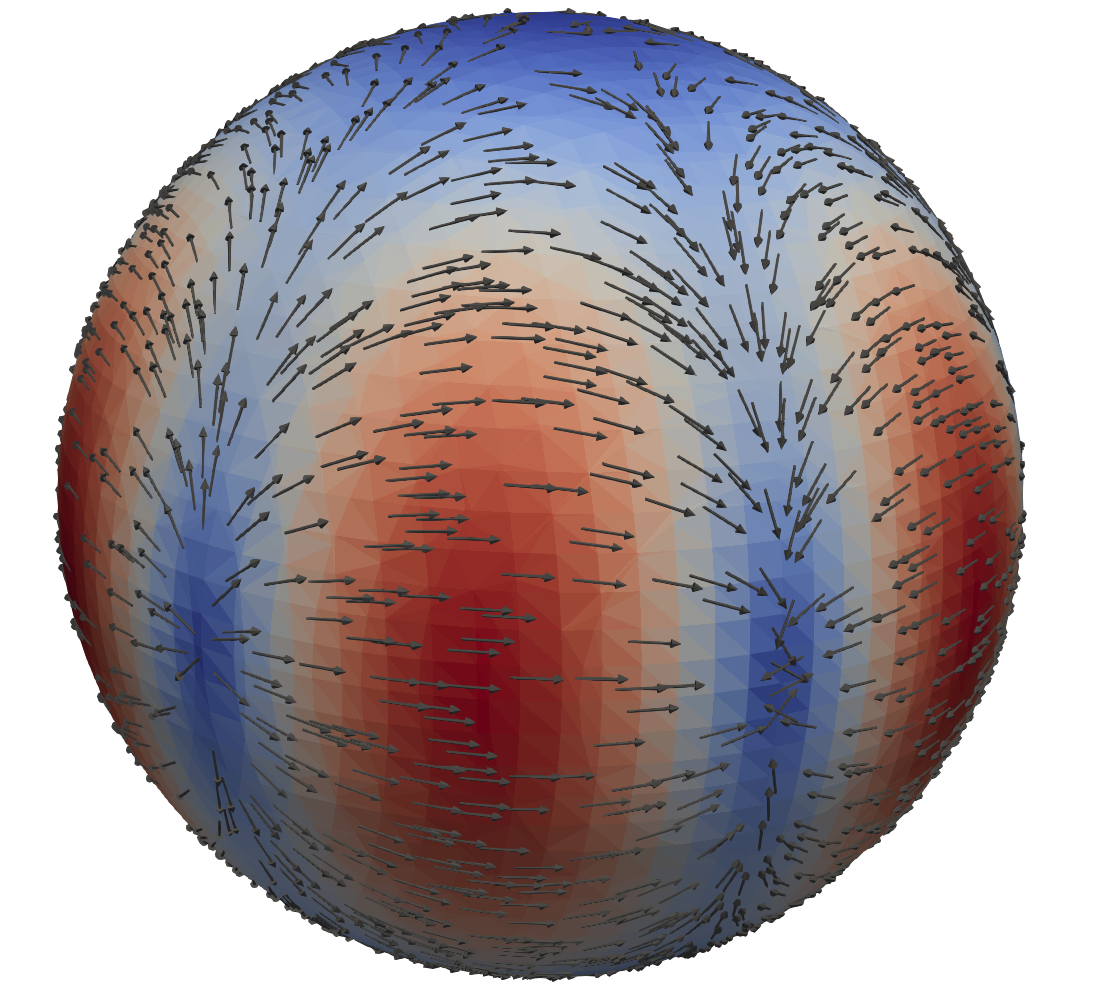}
\caption{\label{fig:sphere_tor} The computed Darcy velocity  from Examples~\ref{exam1} and \ref{exam2}. The left figure shows the pressure and induced surface mesh in Example~ \ref{exam2}.}
\end{center}
\end{figure}

\begin{example}\label{exam2}\rm
This example can be found in~\cite{hansbo2017stabilized} for the Darcy flow along  the torus surface.
We consider $\Gamma = \{ \bx\in\Omega \mid r^2 = x_3^2 + (\sqrt{x_1^2 + x_2^2} - R)^2\}$ embedded in $\Omega=(-1.6, 1.6)\times (-1.6, 1.6) \times (-0.8, 0.8)$. We set $R = 1$ and $r = 0.5$.
The solution $(\mathbf{u}, p)$ to \eqref{Darcy} with right-hand sides $g = 0$ and
\[
\mathbf{f} =
\left(
\begin{aligned}
 x_1x_3(2 - ( 1 - {R}/ \sqrt{x_1^2 + x_2^2})/A)\\
 x_2 x_3(- 2 - ( 1 - {R}/ \sqrt{x_1^2 + x_2^2})/A)\\
1 - \frac{2 (x_1^2 - x_2^2)(\sqrt{x_1^2 + x_2^2} - R)}{\sqrt{x_1^2 + x_2^2}} - x_3^2 / A
 \end{aligned}
\right) \quad \text{with}~ A = (R^2 + x_1^2 + x_2^2 - 2 R \sqrt{x_1^2 + x_2^2} + x_3^2),
\]
is given by
\[
    p(\bx)= x_3,\qquad \mathbf{u} = (2x_1 x_3, -2 x_2 x_3, 2(x_1^2 - x_2^2)(R - \sqrt{x_1^2 + x_2^2})/\sqrt{x_1^2 + x_2^2}).
\]
\end{example}

\begin{table}
\begin{center}
\caption{Errors norms  and convergence rates for the example~\ref{exam1} \label{tab:sphere}}\smallskip
\small
\begin{tabular}{r|llllll}\hline
\#d.o.f. & $\|\mathbf{u} - \mathbf{u}_h\|_{L^2}$  & rate & $\|p - p_h\|_{L^2}$ & rate & $\|p - p_h\|_{L^\infty}$& rate \\ \hline\\[-2ex]
556&	2.250e-0   &      &2.273e-1   &          &6.005e-1 & \\
2332&	5.978e-1   &1.91    &  5.392e-2 & 2.07   &1.593e-1 & 1.91\\
9532&   1.559e-1& 1.94 &   1.372e-2  & 1.97      &4.121e-2 & 1.95 \\
38212&  4.907e-2 & 1.66 &  3.192e-3 & 2.10     &9.613e-3 & 2.10 \\  \hline
\end{tabular}
\end{center}
\end{table}

\begin{table}
\begin{center}
\caption{Errors norms  and convergence rates for the example~\ref{exam2} \label{tab:tor}}\smallskip
\small
\begin{tabular}{r|llllll}\hline
\#d.o.f. & $\|\mathbf{u} - \mathbf{u}_h\|_{L^2}$  & rate & $\|p - p_h\|_{L^2}$ & rate & $\|p - p_h\|_{L^\infty}$& rate \\ \hline\\[-2ex]
560&  6.979e-2&       &1.749e-2  &       &3.762e-2&  \\
2300&  2.042e-2& 1.77 &3.775e-3  & 2.21  &1.016e-2& 1.89 \\
9096&  6.321e-3&2.13  &8.759e-4  & 2.65  &2.328e-3&2.68 \\
35528 & 2.626e-3 &1.60&2.154e-4  & 2.55  &5.933e-4&2.48  \\  \hline
\end{tabular}
\end{center}
\end{table}

Again, we use a sequence of octree bulk grids.
We start with the initial uniform grids with $h = 1/4$ for Example~\ref{exam1} and $h = 8/25$ for Example~\ref{exam2},
which were further gradely refined towards the surfaces.
Tables~\ref{tab:sphere} and~\ref{tab:tor} show finite element errors and convergence rates for the computed solutions over several levels of refinement. We see that the convergence rates overall improve compared to the case with junctions, which is expected from the analysis. At the same time, the fracture bending does not affect the efficiency of the method, which is also well known property of the Trace FEM.
The computed solutions and induced surface meshes are illustrated in Figure~\ref{fig:sphere_tor}. \MO{Convergence rates for this test well agree with those reported in~\cite{hansbo2017stabilized} for tetrahedra bulk mesh.}

\subsection{Pressure drop driven flow in a fracture network}
The last example demonstrates the flexibility in applying the method for the case of more complex fracture networks.
\begin{example}\label{exam4}\rm
We consider a fracture network consisting of $5$ components, both curvilinear and planar,  and embedded in the bulk domain $\Omega = (-1, 1)^3$.
The fracture network is illustrated in Figure~\ref{fig:mix_sol_mesh} (left), where each component $\Gamma_i$, $i=1,\dots,5$ has a distinct color.
On the parts of $\partial\Gamma$ crossing the left and the right sides of the cube, we prescribe the Dirichlet pressure boundary conditions: $p = 2$ for $\{ \mathbf{x} \in \Gamma_1\,:\, x = -1\}$ and $p = 0$ for $\{\mathbf{x} \in \Gamma_2 \cup  \Gamma_4\,:\, x = 1\}$ and for
$\{\mathbf{x} \in \Gamma_5: z = 1\}$.
On the rest of  $\partial\Gamma$ we prescribe  zero-flux conditions. Thus the boundary conditions define the pressure drop that  drives the flow from left to right through the network.
The computed solution is  shown in  Figure~\ref{fig:mix_sol_mesh} (right).

\begin{figure}[ht!]
\begin{center}
\includegraphics[width=0.39\textwidth]{./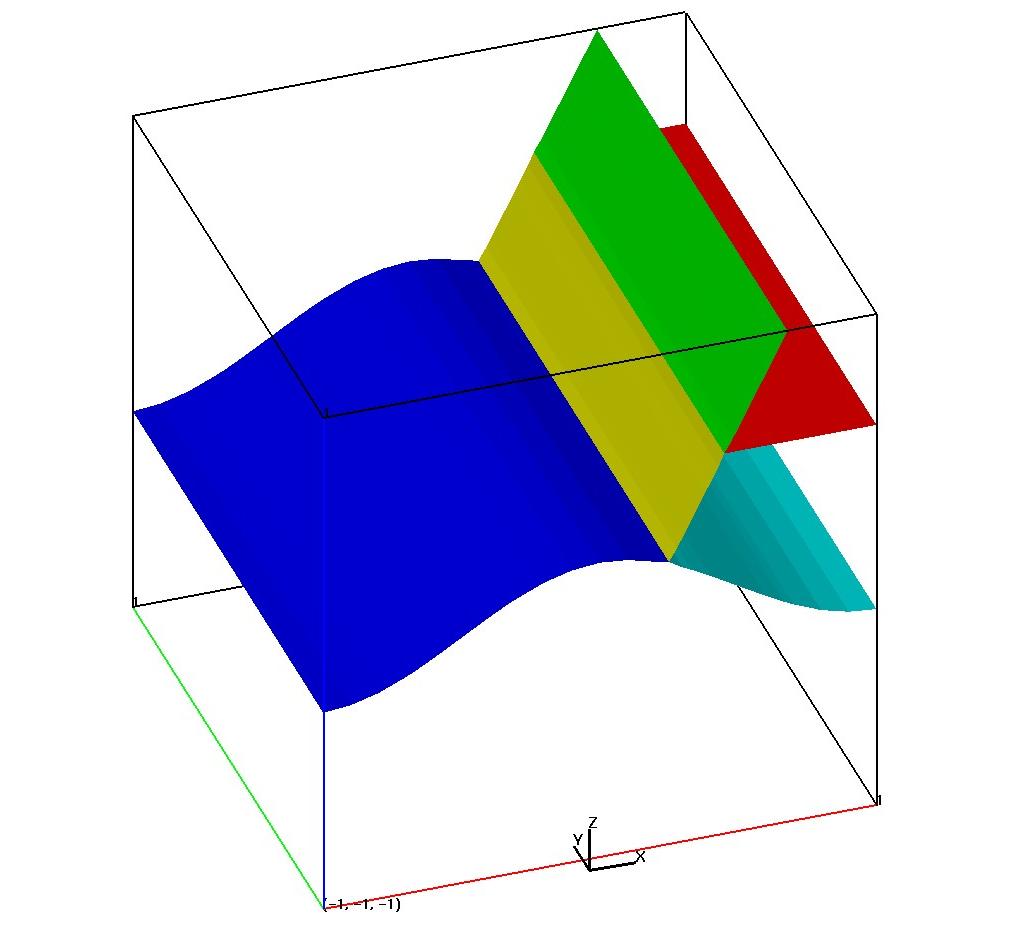}\qquad
\includegraphics[width=0.45\textwidth]{./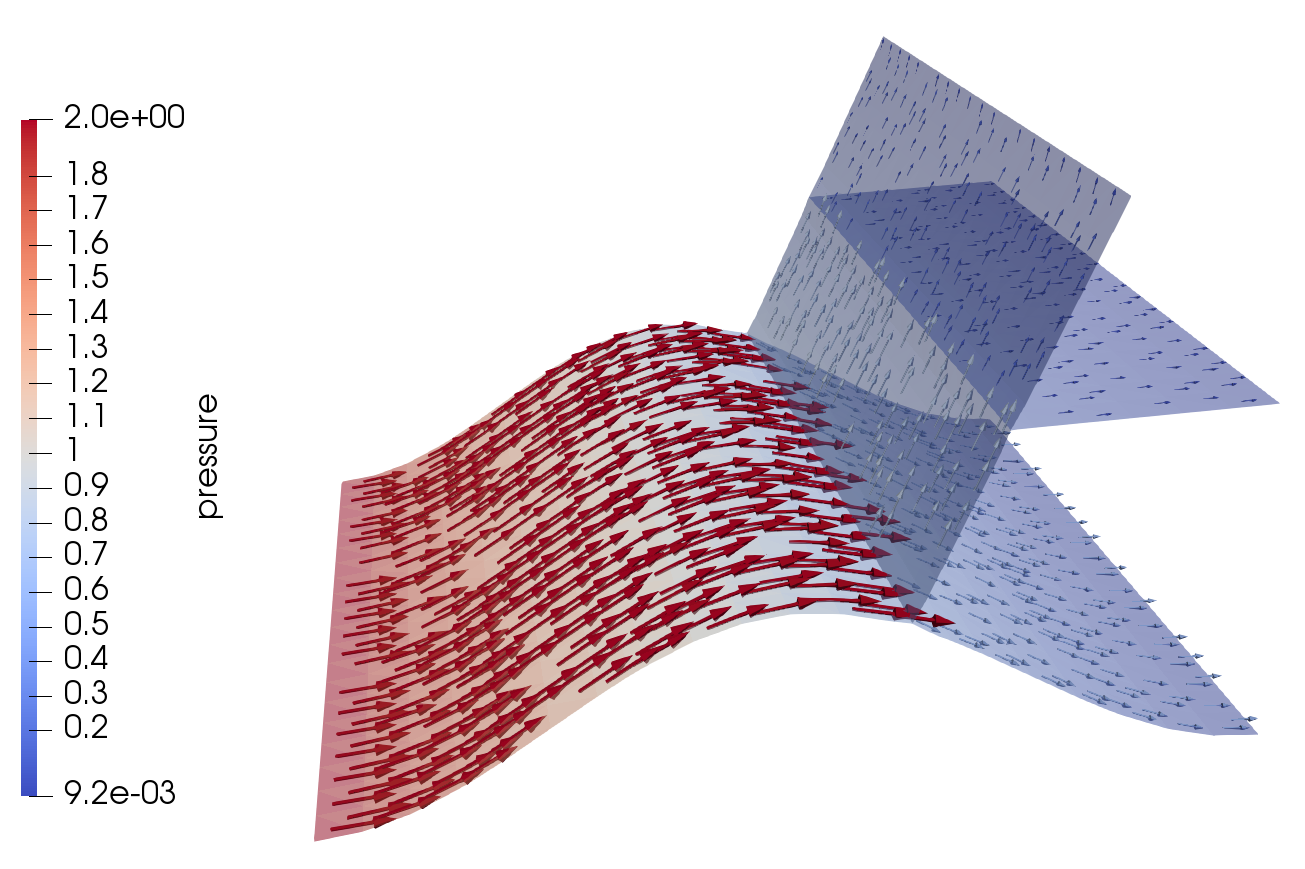}
\caption{\label{fig:mix_sol_mesh}  (Left) Fracture network from Example~\ref{exam4}. (Right) The computed pressure and velocity field}
\end{center}
\end{figure}

\end{example}
\subsection*{Acknowledgments}
The work of the first author (the method development, setup and analysis of numerical experiments) was supported by the Russian Science Foundation Grant 17-71-10173; the second author was supported by the NSF grant~1717516.

\bibliographystyle{abbrv}
\bibliography{ChernOlshan}

\begin{thebibliography}{10}

\bibitem{alboin2002modeling}
C.~Alboin, J.~Jaffr{\'e}, J.~E. Roberts, and C.~Serres.
\newblock Modeling fractures as interfaces for flow and transport.
\newblock In {\em Fluid Flow and Transport in Porous Media, Mathematical and
  Numerical Treatment}, volume 295, page~13. American Mathematical Soc., 2002.

\bibitem{angot2009asymptotic}
P.~Angot, F.~Boyer, and F.~Hubert.
\newblock Asymptotic and numerical modelling of flows in fractured porous
  media.
\newblock {\em ESAIM: Mathematical Modelling and Numerical Analysis},
  43(2):239--275, 2009.

\bibitem{antonietti2019discontinuous}
P.~F. Antonietti, C.~Facciola, A.~Russo, and M.~Verani.
\newblock Discontinuous {G}alerkin approximation of flows in fractured porous
  media on polytopic grids.
\newblock {\em SIAM Journal on Scientific Computing}, 41(1):A109--A138, 2019.

\bibitem{arnold2002unified}
D.~N. Arnold, F.~Brezzi, B.~Cockburn, and L.~D. Marini.
\newblock Unified analysis of discontinuous {G}alerkin methods for elliptic
  problems.
\newblock {\em SIAM Journal on Numerical Analysis}, 39(5):1749--1779, 2002.

\bibitem{berrone2013simulations}
S.~Berrone, S.~Pieraccini, and S.~Scial\'{o}.
\newblock On simulations of discrete fracture network flows with an
  optimization-based extended finite element method.
\newblock {\em SIAM Journal on Scientific Computing}, 35(2):A908--A935, 2013.

\bibitem{boon2018robust}
W.~M. Boon, J.~M. Nordbotten, and I.~Yotov.
\newblock Robust discretization of flow in fractured porous media.
\newblock {\em SIAM Journal on Numerical Analysis}, 56(4):2203--2233, 2018.

\bibitem{brenner2016gradient}
K.~Brenner, M.~Groza, C.~Guichard, G.~Lebeau, and R.~Masson.
\newblock Gradient discretization of hybrid dimensional {D}arcy flows in
  fractured porous media.
\newblock {\em Numerische Mathematik}, 134(3):569--609, 2016.

\bibitem{bruce1974poisson}
R.~Bruce~Kellogg.
\newblock On the {P}oisson equation with intersecting interfaces.
\newblock {\em Applicable Analysis}, 4(2):101--129, 1974.

\bibitem{cutFEM}
E.~Burman, S.~Claus, P.~Hansbo, M.~G. Larson, and A.~Massing.
\newblock Cutfem: Discretizing geometry and partial differential equations.
\newblock {\em International Journal for Numerical Methods in Engineering},
  104(7):472--501, 2015.

\bibitem{burman2016cutb}
E.~Burman, P.~Hansbo, M.~G. Larson, and A.~Massing.
\newblock Cut finite element methods for partial differential equations on
  embedded manifolds of arbitrary codimensions.
\newblock {\em ESAIM: Mathematical Modelling and Numerical Analysis},
  52:2247--2282, 2019.

\bibitem{chave2018hybrid}
F.~Chave, D.~A. Di~Pietro, and L.~Formaggia.
\newblock A hybrid high-order method for darcy flows in fractured porous media.
\newblock {\em SIAM Journal on Scientific Computing}, 40(2):A1063--A1094, 2018.

\bibitem{chernyshenko2018hybrid}
A.~Y. Chernyshenko, M.~A. Olshanskii, and Y.~V. Vassilevski.
\newblock A hybrid finite volume--finite element method for bulk--surface
  coupled problems.
\newblock {\em Journal of Computational Physics}, 352:516--533, 2018.

\bibitem{Alg2}
K.~Deckelnick, C.~M. Elliott, and T.~Ranner.
\newblock Unfitted finite element methods using bulk meshes for surface partial
  differential equations.
\newblock {\em SIAM Journal on Numerical Analysis}, 52(4):2137--2162, 2014.

\bibitem{economides1989reservoir}
M.~J. Economides, K.~G. Nolte, et~al.
\newblock {\em Reservoir stimulation}, volume~2.
\newblock Prentice Hall Englewood Cliffs, NJ, 1989.

\bibitem{ER2013}
C.~M. Elliott and T.~Ranner.
\newblock Finite element analysis for coupled bulk-surface partial differential
  equation.
\newblock {\em IMA J Numer Anal}, 33:377--402, 2013.

\bibitem{flemisch2018benchmarks}
B.~Flemisch, I.~Berre, W.~Boon, A.~Fumagalli, N.~Schwenck, A.~Scotti,
  I.~Stefansson, and A.~Tatomir.
\newblock Benchmarks for single-phase flow in fractured porous media.
\newblock {\em Advances in Water Resources}, 111:239--258, 2018.

\bibitem{flemisch2016review}
B.~Flemisch, A.~Fumagalli, and A.~Scotti.
\newblock A review of the {XFEM}-based approximation of flow in fractured
  porous media.
\newblock In {\em Advances in Discretization Methods}, pages 47--76. Springer,
  2016.

\bibitem{formaggia2014reduced}
L.~Formaggia, A.~Fumagalli, A.~Scotti, and P.~Ruffo.
\newblock A reduced model for {D}arcy's problem in networks of fractures.
\newblock {\em ESAIM: Mathematical Modelling and Numerical Analysis},
  48(4):1089--1116, 2014.

\bibitem{frih2012modeling}
N.~Frih, V.~Martin, J.~E. Roberts, and A.~Sa{\^a}da.
\newblock Modeling fractures as interfaces with nonmatching grids.
\newblock {\em Computational Geosciences}, 16(4):1043--1060, 2012.

\bibitem{fumagalli2019conforming}
A.~Fumagalli, E.~Keilegavlen, and S.~Scial{\`o}.
\newblock Conforming, non-conforming and non-matching discretization couplings
  in discrete fracture network simulations.
\newblock {\em Journal of Computational Physics}, 376:694--712, 2019.

\bibitem{grande2017analysis}
J.~Grande, C.~Lehrenfeld, and A.~Reusken.
\newblock Analysis of a high-order trace finite element method for pdes on
  level set surfaces.
\newblock {\em SIAM Journal on Numerical Analysis}, 56(1):228--255, 2018.

\bibitem{Grisvard}
P.~Grisvard.
\newblock Elliptic problems in nonsmooth domains.
\newblock In {\em Monographs and Studies in Mathematics}, volume~24, pages
  142--155. Pitman, Boston, 1985.

\bibitem{guzman2018inf}
J.~Guzm{\'a}n and M.~Olshanskii.
\newblock Inf-sup stability of geometrically unfitted {S}tokes finite elements.
\newblock {\em Mathematics of Computation}, 87(313):2091--2112, 2018.

\bibitem{Hansbo}
A.~Hansbo and P.~Hansbo.
\newblock An unfitted finite element method, based on {Nitsche's} method, for
  elliptic interface problems.
\newblock {\em Comput. Methods Appl. Mech. Engrg.}, 191:5537--5552, 2002.

\bibitem{hansbo2017stabilized}
P.~Hansbo, M.~G. Larson, and A.~Massing.
\newblock A stabilized cut finite element method for the {D}arcy problem on
  surfaces.
\newblock {\em Computer Methods in Applied Mechanics and Engineering},
  326:298--318, 2017.

\bibitem{MCM2}
C.-C. Ho, F.-C. Wu, B.-Y. Chen, Y.-Y. Chuang, and M.~Ouhyoung.
\newblock Cubical marching squares: Adaptive feature preserving surface
  extraction from volume data.
\newblock {\em EUROGRAPHICS 2005 / M. Alexa and J. Marks (Guest Editors)},
  24(3), 2005.

\bibitem{huang2011use}
H.~Huang, T.~A. Long, J.~Wan, and W.~P. Brown.
\newblock On the use of enriched finite element method to model subsurface
  features in porous media flow problems.
\newblock {\em Computational Geosciences}, 15(4):721--736, 2011.

\bibitem{Koppel2019}
M.~K{\"o}ppel, V.~Martin, and J.~E. Roberts.
\newblock A stabilized {L}agrange multiplier finite-element method for flow in
  porous media with fractures.
\newblock {\em GEM - International Journal on Geomathematics}, 10(1):7, Jan
  2019.

\bibitem{martin2005modeling}
V.~Martin, J.~Jaffr{\'e}, and J.~E. Roberts.
\newblock Modeling fractures and barriers as interfaces for flow in porous
  media.
\newblock {\em SIAM Journal on Scientific Computing}, 26(5):1667--1691, 2005.

\bibitem{masud2002stabilized}
A.~Masud and T.~J. Hughes.
\newblock A stabilized mixed finite element method for {D}arcy flow.
\newblock {\em Computer methods in applied mechanics and engineering},
  191(39-40):4341--4370, 2002.

\bibitem{ORG09}
M.~Olshanskii, A.~Reusken, and J.~Grande.
\newblock A finite element method for elliptic equations on surfaces.
\newblock {\em SIAM J. Numer. Anal.}, 47:3339--3358, 2009.

\bibitem{olshanskii2018finite}
M.~A. Olshanskii, A.~Quaini, A.~Reusken, and V.~Yushutin.
\newblock A finite element method for the surface {S}tokes problem.
\newblock {\em SIAM Journal on Scientific Computing}, 40:A2492--A2518, 2018.

\bibitem{TraceFEM}
M.~A. Olshanskii and A.~Reusken.
\newblock Trace finite element methods for {PDEs} on surfaces.
\newblock In {\em Geometrically Unfitted Finite Element Methods and
  Applications}, pages 211--258. Springer, 2017.

\bibitem{olshanskii2016numerical}
M.~A. Olshanskii and D.~Safin.
\newblock Numerical integration over implicitly defined domains for higher
  order unfitted finite element methods.
\newblock {\em Lobachevskii Journal of Mathematics}, 37(5):582--596, 2016.

\bibitem{reusken2015analysis}
A.~Reusken.
\newblock Analysis of trace finite element methods for surface partial
  differential equations.
\newblock {\em IMA Journal of Numerical Analysis}, 35(4):1568--1590, 2015.

\bibitem{stein2016singular}
E.~M. Stein.
\newblock {\em Singular integrals and differentiability properties of
  functions}, volume~30.
\newblock Princeton university press, 2016.

\bibitem{zoback2010reservoir}
M.~D. Zoback.
\newblock {\em Reservoir geomechanics}.
\newblock Cambridge University Press, 2010.

\end{thebibliography}

\appendix

\section{Proof of the FE trace inequality \eqref{eq_trace2}}

The proof largely follows the arguments given in \cite{guzman2018inf} to prove \eqref{eq_trace} and makes use of the following result found for example in \cite{Grisvard}:
\begin{equation}\label{Trace}
\|v\|_{L^2(\partial\omega)}^2 \le C \|v\|_{H^1(\omega)} \|v\|_{L^2(\omega)}  \quad \text{ for all } v \in H^1(\omega).
\end{equation}
for a bounded domain $\omega\subset\mathbb{R}^n$ with Lipschitz boundary.

\begin{lemma}
Let $T \in \mathcal{T}_h$. There exists an extension operator $R_T: H^2(T) \rightarrow H(\mathbb{R}^3)$ such that
$R_T v= v$ on  $T$ and
\begin{equation}
\|R_T v\|_{L^2(\mathbb{R}^3)} + h_T \|\nabla R_T v\|_{L^2(\mathbb{R}^3)}+ h_T^2 \|D^2 R_T v\|_{L^2(\mathbb{R}^3)} \le   C (\|v\|_{L^2(T)} + h_T \|\nabla v\|_{L^2(T)}+ h_T^2 \|D^2 v\|_{L^2(T)}) \label{extensionT2}
\end{equation}
where the constant $C$ is independent of $T$ and $v$.
\end{lemma}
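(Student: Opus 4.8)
The plan is to prove the estimate by a scaling argument that reduces everything to a single reference cell, on which a classical extension theorem applies with an absolute constant. Since $\T_h$ consists of cubes, let $\widehat T=(0,1)^3$ be the reference cube and let $F:\widehat T\to T$ be the affine dilation $F(\widehat\bx)=h_T\widehat\bx+\bx_T$, where $\bx_T$ is a vertex of $T$; the map $F$ extends to a bijection of $\mathbb{R}^3$ with constant Jacobian $|\det DF|=h_T^3$ and isotropic linear part $h_T\,\mathrm{id}$. On $\widehat T$, which is a bounded Lipschitz domain, Stein's extension theorem \cite{stein2016singular} provides a bounded linear operator $R_{\widehat T}:H^2(\widehat T)\to H^2(\mathbb{R}^3)$ with $R_{\widehat T}\widehat v=\widehat v$ on $\widehat T$ and $\|R_{\widehat T}\widehat v\|_{H^2(\mathbb{R}^3)}\le C\|\widehat v\|_{H^2(\widehat T)}$, where $C$ is an absolute constant. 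I would then \emph{define} $R_T v:=\big(R_{\widehat T}(v\circ F)\big)\circ F^{-1}$, which lies in $H^2(\mathbb{R}^3)$ and satisfies $R_T v=v$ on $T$ by construction.

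The heart of the argument is tracking how the three norms transform under $F$. For $g$ on $\mathbb{R}^3$ and $\widehat g=g\circ F$, the chain rule together with $|\det DF|=h_T^3$ yields the identities $\|g\|_{L^2(T)}=h_T^{3/2}\|\widehat g\|_{L^2(\widehat T)}$, then $h_T\|\nabla g\|_{L^2(T)}=h_T^{3/2}\|\nabla\widehat g\|_{L^2(\widehat T)}$, and $h_T^2\|D^2 g\|_{L^2(T)}=h_T^{3/2}\|D^2\widehat g\|_{L^2(\widehat T)}$, and likewise with $T,\widehat T$ replaced by $\mathbb{R}^3$. The key observation is that \emph{every} term of the weighted norm picks up exactly the same factor $h_T^{3/2}$; this is precisely what renders the final constant independent of $h_T$. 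Hence, writing $\widehat v=v\circ F$ and $\widehat w=R_{\widehat T}\widehat v$, the left-hand side of \eqref{extensionT2} equals $h_T^{3/2}\big(\|\widehat w\|_{L^2(\mathbb{R}^3)}+\|\nabla\widehat w\|_{L^2(\mathbb{R}^3)}+\|D^2\widehat w\|_{L^2(\mathbb{R}^3)}\big)\le C\,h_T^{3/2}\|\widehat w\|_{H^2(\mathbb{R}^3)}$.

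Applying the reference extension bound $\|\widehat w\|_{H^2(\mathbb{R}^3)}\le C\|\widehat v\|_{H^2(\widehat T)}$ and then undoing the scaling on $\widehat T$ via the same identities transforms the right-hand side back into $C\big(\|v\|_{L^2(T)}+h_T\|\nabla v\|_{L^2(T)}+h_T^2\|D^2 v\|_{L^2(T)}\big)$, which is the claimed inequality \eqref{extensionT2} with a constant $C$ inherited solely from the fixed reference extension operator.

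I expect the main (and essentially the only) delicate point to be the bookkeeping of the $h_T$ powers: one must verify that all three weighted terms scale by the common factor $h_T^{3/2}$ both on $T$ and on $\mathbb{R}^3$, since any mismatch would reintroduce an $h_T$-dependence into $C$. This uniform scaling is special to the weighting $1,\,h_T,\,h_T^2$ combined with the isotropy of the dilation $F$; for cubic cells $F$ has condition number one, so no shape-regularity hypothesis is needed, whereas for general affine simplices one would additionally invoke shape-regularity to control $\|DF\|\,\|DF^{-1}\|$. The only non-elementary ingredient is the existence of the fixed-domain $H^2$ extension operator $R_{\widehat T}$, which is guaranteed by Stein's theorem because $\widehat T$ is Lipschitz.
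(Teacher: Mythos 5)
Your proposal is correct and follows essentially the same route as the paper: pull back to a reference cube, apply Stein's extension theorem there, push the extension forward by the affine map, and track the $h_T$-scaling of the three weighted norms. The only cosmetic difference is that you exploit the isotropic dilation $F(\widehat\bx)=h_T\widehat\bx+\bx_T$ to get exact scaling identities (each term picking up the common factor $h_T^{3/2}$), whereas the paper writes the change of variables for a general affine map $F_T(\hat x)=B\hat x+b$ and uses the bounds $|B_{ij}|\le C h_T$, $|B^{-1}_{ij}|\le C h_T^{-1}$ to reach the same conclusion.
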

\begin{proof}   We denote by $\hat{T}$ the reference cube of unit size. The center of $\hat{T}$ is placed at the origin. Then, we know (\cite{stein2016singular}) there exists an extension operator from $R: H^1(\hat{T}) \rightarrow H_0^1(B_{2})$ such that $R \hat{v}=  \hat{v}$ on $\hat{T}$ and
\begin{alignat}{1}
\|R \hat{v}\|_{H^2(B_2)} \le & C \|\hat{v}\|_{H^2(\hat{T})},  \label{refext2}
\end{alignat}
where $B_2$ is the ball with radius 2 centered at the origin.

Let $F_T: \hat{T} \rightarrow T$ be the onto affine mapping and has the form $F_T (\hat{x})= B \hat{x}+ b$. For any $v \in H^2(T)$ we  define $\hat{v} \in H^2(\hat{T})$ in the following way:
$\hat{v}(\hat{x})=v(F_T(\hat{x}))$. Then, the desired extension is given by
$
(R_T v)(x)= (R \hat{v})(F^{-1}_T(x)).
$
For notational convenience we use $w=R_T v$. Then, we see that $\hat{w}=R \hat{v}$. Using a change of variables formula we get
\begin{equation*}
\|D^2 w\|_{L^2(\mathbb{R}^3)}^2= \int_{ F(B_2)}  |D^2 w(x)|^2  dx= \int_{B_2} |B^{-t} D^2 \hat{w}(\hat{x})B^{-1}|^2 |\text{det} B|  d\hat{x}.
\end{equation*}
We have
$
|B_{ij}| \le  C\, h_T $, $
|B_{ij}^{-1} | \le  C \, h_T^{-1}.
$
Therefore, we obtain using \eqref{refext2}
\begin{equation*}
\int_{B_2} |B^{-t} D^2 \hat{w}(\hat{x})B^{-1}|^2 |\text{det} B|  d\hat{x} \le C h_T^{-1} \|D^2 \hat{w} \|_{L^2(B_2)}^2= C h_T^{-1} \|D^2 R \hat{v} \|_{L^2(B_2)}^2\le C h_T^{-1} \|\hat{v} \|_{H^2(\hat{T})}^2.
\end{equation*}
It is standard to show, again using a change of variable formula, and the bounds for $B$ and $B^{-1}$ above that
\begin{equation*}
 h_T^{-1} (\|\hat{v}\|_{L^2(\hat{T})}^2+ \|\nabla \hat{v}\|_{L^2(\hat{T})}^2+  \|D^2 \hat{v}\|_{L^2(\hat{T})}^2) \le C \, (h_T^{-4} \|v\|_{L^2(T)}^2+ h_T^{-2}\|\nabla v\|_{L^2(T)}^2+\|D^2 v\|_{L^2(T)}^2).
\end{equation*}
Therefore, we have shown
\begin{equation*}
h_T^{-1} \|D^2 R_T v\|_{L^2(\mathbb{R}^3)} \le (\|v\|_{L^2(T)}+ h_T\|\nabla v\|_{L^2(T)}+ h_T^2\|D^2 v\|_{L^2(T)}).
\end{equation*}
The required bounds for $\|R_T v\|_{L^2(\mathbb{R}^3)}$ and $\|\nabla R_T v\|_{L^2(\mathbb{R}^3)}$ follow a similar argument; see Lemma~5 in \cite{guzman2018inf}.
\end{proof}

We are now ready to prove the  FE trace inequality \eqref{eq_trace2}.
Let $T \in \mathcal{T}_h$ and let $v \in H^1(T)$. Then, we apply \eqref{Trace} first for $\omega=\Gamma_i$ and next for $\omega=\Omega$, to get (we again use $w=R_T v$ for notation convenience):
\begin{equation*}
\begin{split}
\|v\|_{L^2(T \cap\partial\Gamma_i)} &\le \|w\|_{L^2(\partial\Gamma_i)}
 \le C \|w\|_{L^2(\Gamma_i)}^{1/2}  \| w\|_{H^1(\Gamma_i)}^{1/2} \\
 &\le C \|w\|_{L^2(\Omega\cap\widehat{\Gamma}_i)}^{1/2}  \| w\|_{H^1(\Omega\cap\widehat{\Gamma}_i)}^{1/2}
 \le C ( \|w\|_{L^2(\Omega\cap\widehat{\Gamma}_i)}+ \|w\|_{L^2(\Omega\cap\widehat{\Gamma}_i)}^{1/2}  \|\nabla w\|_{H^1(\Omega\cap\widehat{\Gamma}_i)}^{1/2})\\
 &\le C( \|w\|_{L^2(\Omega)}^{1/2} \|w\|_{H^1(\Omega)}^{1/2}+ \|w\|_{L^2(\Omega)}^{1/4} \|w\|_{H^1(\Omega)}^{1/2} \|\nabla w\|_{H^1(\Omega)}^{1/4})  \\
  &\le C\big ( \|w\|_{L^2(\Omega)}+ \|w\|_{L^2(\Omega)}^{1/2} \|\nabla w\|_{L^2(\Omega)}^{1/2}
  + \|w\|_{L^2(\Omega)}^{1/2} \|\nabla w\|_{L^2(\Omega)}^{1/4}  \|D^2 w\|_{L^2(\Omega)}^{1/4}\\
  &\qquad\quad + \|w\|_{L^2(\Omega)}^{1/4} \|\nabla w\|_{L^2(\Omega)}^{3/4} + \|w\|_{L^2(\Omega)}^{1/4} \|\nabla w\|_{L^2(\Omega)}^{1/2} \|D^2 w\|_{L^2(\Omega)}^{1/4}\big).
\end{split}
\end{equation*}
We apply   Young's  inequality {and use $h_T\le h_0$} to handle terms on the right hand side. For example,
we estimate
\begin{equation*}
\begin{split}
    \|w\|_{L^2(\Omega)}^{1/4} \|\nabla w\|_{L^2(\Omega)}^{3/4} & \le \frac14\|w\|_{L^2(\Omega)}+ \frac34\|\nabla w\|_{L^2(\Omega)}\le C(h_T^{-1}\|w\|_{L^2(\Omega)}+ \frac34\|\nabla w\|_{L^2(\Omega)})\\
     \|w\|_{L^2(\Omega)}^{1/4} \|\nabla w\|_{L^2(\Omega)}^{1/2} \|D^2 w\|_{L^2(\Omega)}^{1/4}& \le
     \frac12\|\nabla w\|_{L^2(\Omega)}+ \frac12\|w\|_{L^2(\Omega)}^{1/2}  \|D^2 w\|_{L^2(\Omega)}^{1/2}\\
     &\le
     \frac12\|\nabla w\|_{L^2(\Omega)}+ \frac1{4h_T}\|w\|_{L^2(\Omega)}  + \frac{h_T}4\|D^2 w\|_{L^2(\Omega)}.
\end{split}
\end{equation*}
Other terms are treated in the same way to get
\begin{equation*}
\|v\|_{L^2(T \cap\partial\Gamma_i)} \le C ( h_T^{-1} \|R_T v\|_{L^2(\Omega)}  +\|\nabla R_T v\|_{L^2(\Omega)}+h_T\|D^2 R_T v\|_{L^2(\Omega)}).
\end{equation*}
The desired result in  \eqref{eq_trace2} now follows after applying  \eqref{extensionT2}.

\end{document}